\documentclass[reqno]{amsart}

\usepackage{color}
\usepackage[dvipsnames]{xcolor}

\usepackage{ifpdf}
\ifpdf 
    \usepackage[pdftex]{graphicx}   
    \pdfcompresslevel=9 
    \usepackage[pdftex,     
            plainpages=false,   
            breaklinks=true,    
            colorlinks=true,
            linkcolor=red,
            citecolor=green,
            pdftitle=My Document
            pdfauthor=My Good Self
           ]{hyperref} 
\else 
    \usepackage{graphicx}       
    \usepackage{hyperref}       
\fi 

\hypersetup{colorlinks, linkcolor=red!50!black, citecolor=green!50!black, urlcolor=[rgb]{0.3, 0.2, 0.5}}

\usepackage{subfig}
\usepackage{glossaries}
\usepackage{glossary-mcols}

\usepackage{aurical}
\usepackage{amsfonts,amsmath}	
\usepackage{amssymb}
\usepackage{verbatim}
\usepackage{amsopn}
\usepackage[english]{babel}
\usepackage{amsthm}
\usepackage{enumerate}
\usepackage{mathrsfs}	
\usepackage{mathtools}
\usepackage{esint}
\usepackage{todonotes}

\usepackage{bm}
\usepackage{bbm}
\usepackage{caption}

\usepackage{tikz}
\usepackage{tikz-3dplot}
\usepackage{pgfplots}
\usepackage{graphicx}
\usepackage{subfig}

\usetikzlibrary{intersections,calc}

\captionsetup{format=hang,labelfont={sf,bf}}



\setlength{\topmargin}{-1cm}
\setlength{\textwidth}{16cm}
\setlength{\textheight}{23cm}
\setlength{\oddsidemargin}{0pt}
\setlength{\evensidemargin}{0pt}

\date{\today}


\theoremstyle{definition} \newtheorem{definition}{Definition}[section]
\theoremstyle{definition} \newtheorem{remark}[definition]{Remark}
\theoremstyle{plain} \newtheorem{lemma}[definition]{Lemma}
\theoremstyle{plain} \newtheorem{proposition}[definition]{Proposition}
\theoremstyle{plain} \newtheorem{theorem}[definition]{Theorem}
\theoremstyle{plain} 
\theoremstyle{definition} 
\theoremstyle{plain} 
\theoremstyle{definition} 
\theoremstyle{plain}

\DeclareMathOperator{\sign}{sign}

\let\d\relax
\newcommand{\d}{\partial}
\DeclareMathOperator{\esssup}{ess\,sup}
\DeclareMathOperator{\dive}{div}

\DeclareMathOperator{\dist}{dist}

\DeclareMathOperator{\Lip}{Lip}

\newcommand{\R}{\mathbb{R}}
\newcommand{\Q}{\mathbb{Q}}
\newcommand{\N}{\mathbb{N}}

\newcommand{\e}{\varepsilon}
\newcommand{\eps}{\varepsilon}
\newcommand{\fhi}{\varphi}

\newcommand{\M}{\mathscr{M}}

\newcommand{\1}{\mathbbm 1}

\renewcommand{\b}{{\bm b}}


\renewcommand{\H}{\mathscr H}
\newcommand{\rest}{\llcorner}

\numberwithin{equation}{section} 

\theoremstyle{plain} \newtheorem*{theorem*}{Theorem}
\theoremstyle{plain} 
\theoremstyle{plain} \newtheorem*{mthm*}{Main Theorem}
\theoremstyle{plain} \newtheorem*{conjecture*}{Conjecture}
\theoremstyle{plain} 
\theoremstyle{plain} \newtheorem*{problem*}{Problem}

\title[]{Non-uniqueness of signed measure-valued solutions to the continuity equation in presence of a unique flow}

\author{Paolo Bonicatto}
\address{Departement Mathematik und Informatik, Universit\"at Basel, Spiegelgasse 1, CH-4051, Basel, Switzerland.}
\email{paolo.bonicatto@unibas.ch}

\author{Nikolay A. Gusev}
\address{Moscow Institute of Physics and Technology,
9 Institutskiy per., Dolgoprudny, Moscow Region, 141700;
RUDN University,
6 Miklukho-Maklay St, Moscow, 117198;
Steklov Mathematical Institute of Russian Academy of Sciences,
8 Gubkina St, Moscow, 119991;}
\email{n.a.gusev@gmail.com}

\begin{document}

\begin{abstract}
	We consider the continuity equation $\partial_t \mu_t + \dive(\b \mu_t) = 0$, where $\{\mu_t\}_{t \in \R}$ is a measurable family of (possibily signed) Borel measures on $\R^d$ and $\b \colon \R \times \R^d \to \R^d$ is a bounded Borel vector field (and the equation is understood in the sense of distributions). If the measure-valued solution $\mu_t$ is non-negative, then the following \emph{superposition principle} holds: $\mu_t$ can be decomposed into a superposition of measures concentrated along the integral curves of $\b$. For smooth $\b$ this result follows from the method of characteristics, and in the general case it was established by L. Ambrosio. A partial extension of this result for signed measure-valued solutions $\mu_t$ was obtained in \cite{AB}, where the following problem was proposed: does the superposition principle hold for signed measure-valued solutions in presence of unique flow of homeomorphisms solving the associated ordinary differential equation? 
    We answer to this question in the negative, presenting two counterexamples in which uniqueness of the flow of the vector field holds but one can construct non-trivial signed measure-valued solutions to the continuity equation with zero initial data.
    \\
	
	\noindent \textsc{Keywords}: \emph{continuity equation, measure-valued solutions, uniqueness, Superposition Principle.} \\
	
	\noindent \textsc{MSC (2010): 34A12, 35A30, 49Q20.}

\end{abstract}

\maketitle

\section{Introduction}

In this paper we consider the initial value problem for the continuity equation
\begin{equation}\label{PDE}
\begin{cases}
\d_t \mu_t + \dive (\b \mu_t) = 0,\\ 
\mu_0 = \overline{\mu}
\tag{PDE}
\end{cases}
\end{equation}
for finite Borel measures $\{\mu_t\}_{t\in[0,T]}$ on $\R^d$, where $\b\colon [0,T]\times \R^d \to \R^d$ is a given bounded Borel vector field, $T>0$ and $d\in\N$ and $\overline{\mu}\in \mathscr M(\R^d)$ is a given measure on $\R^d$. This class of measure-valued solutions arises naturally in the limit for weakly* converging subsequences of smooth solutions, and it appears in various applications including hyperbolic conservation laws, optimal transport and other areas, see e.g. \cite{BouchotJames98,AGS2008,BPRS}.

In this paper we study the relationship between uniqueness of solutions to \eqref{PDE} and uniqueness to the ordinary differential equation drifted by $\b$, i.e. 
\begin{equation}\label{ODE}
\frac{d}{dt}\gamma(t) = \b(t,\gamma(t)), \quad t\in(0,T), \tag{ODE}
\end{equation}
where $\gamma\in C([0,T]; \R^d)$.

Given a solution $\gamma\in C([0,T]; \R^d)$ of \eqref{ODE} one readily checks that
$\mu_t := \delta_{\gamma(t)}$ solves \eqref{PDE}, where $\delta_p$ denotes the Dirac measure concentrated at $p$. Therefore uniqueness for \eqref{PDE} implies uniqueness for \eqref{ODE}. Hence it is natural to ask whether the converse implication holds.

In the class of non-negative measure-valued solutions the answer to this question is positive,
and it was obtained in \cite{AGS2008} as a consequence of the so-called \emph{superposition principle}. In order to formulate this principle, we will say that a family of Borel measures $\{\mu_t\}_{t\in[0,T]}$ is \emph{represented by} a finite (possibly signed) Borel measure $\eta$ on $C([0,T]; \R^d)$ if
\begin{enumerate}
\item $\eta$ is concentrated on $\Gamma_\b$;
\item $(e_t)_\sharp \eta = \mu_t$ for a.e. $t$,
\end{enumerate}
where $e_t\colon C([0,T]; \R^d) \to \R^d$ is the so-called \emph{evaluation map} defined by $e_t(\gamma) := \gamma(t)$, $(e_t)_\sharp \eta$ denotes the image of $\eta$ under $e_t$, and $\Gamma_\b$ denotes the set of solutions of \eqref{ODE} (note the $\Gamma_\b$ is a Borel subset of $C([0,T];\R^d)$ by \cite[Proposition 2]{Bernard2008}).
For example, if $\gamma \in C([0,T]; \R^d)$ solves \eqref{ODE} then $\eta:= \delta_\gamma$ (as a measure on $C([0,T]; \R^d)$) represents the solution $\mu_t := \delta_{\gamma(t)}$ of \eqref{PDE}.

A straightforward computation shows that if $\{\mu_t\}_{t\in[0,T]}$ is represented by some (possibly signed) measure $\eta$ then $\mu_t$ solves \eqref{PDE}. In this case we will say that $\mu_t$ is a \emph{superposition solution} of \eqref{PDE}.
Clearly uniqueness for \eqref{ODE} implies uniqueness for \eqref{PDE} in the class of superposition solutions.
Indeed, by uniqueness for \eqref{ODE} the continuous mapping $e_0 \colon \Gamma_\b \to \R^d$ is injective,
hence $e_0^{-1}$ is Borel and thus $(e_0)_\sharp \eta = \mu_0$ is equivalent to $\eta = (e_0^{-1})_\sharp \mu_0$.

Therefore, when uniqueness holds for the Cauchy problem for \eqref{ODE}, uniqueness for the Cauchy problem for \eqref{PDE} holds in the class of measure-valued solutions if and only if \emph{any} measure-valued solution of such Cauchy problem is a superposition solution.

The superposition principle established in \cite{AGS2008} states that any \emph{non-negative} solution $\mu_t$ of \eqref{PDE} can be represented by some non-negative measure $\eta$ on $C([0,T];\R^d)$.
However, without extra assumptions this result cannot be extended to \emph{signed} solutions, because \eqref{PDE} can have a nontrivial signed solution even when $\Gamma_\b = \emptyset$ (see e.g. \cite{Gus18a} for the details).

Under Lipschitz bounds on the vector field $\b$ uniqueness for \eqref{PDE} within the class of signed measures is well known, see e.g. \cite[Prop. 8.1.7]{AGS2008}. Out of the classical setting, the first (positive) result is contained in \cite{Bahouri1994}, where the authors considered log-Lipschitz vector fields. Later on, in the paper \cite{AB}, the authors proved that the signed superposition principle holds provided that the vector field satisfies a quantitative two-sided diagonal Osgood condition. More precisely, in \cite{AB} the authors considered vector fields enjoying 
\begin{itemize}
  \item[(O)] \label{i-osgood}
  it holds 
  \begin{equation*}
  \vert \langle \b(x)-\b(y), x-y \rangle \vert \le  C(t)\Vert x-y\Vert \rho(\Vert x-y \Vert)\qquad \forall x,y \in \mathbb R^d, \, \forall t \in (0,T),
  \end{equation*}
  where $C\in L^1(0,T)$ and $\rho \colon [0,1) \to \mathbb [0,+\infty) $ is an Osgood modulus of continuity, i.e. a continuous, non-decreasing function with $\rho(0)=0$ and 
  $$
  \int_0^1 \frac{1}{\rho(s)} \, ds = +\infty. 
  $$
  \item[(B)] \label{i-bounded}
  $\b$ is uniformly bounded.
\end{itemize}
Their results is the following: 
\begin{theorem}[Thm. 1 in \cite{AB}] 
  If the vector field $\b$ satisfies (\hyperref[i-osgood]{O}) and (\hyperref[i-bounded]{B}), then there is uniqueness for \eqref{PDE} in the class of bounded signed measures, i.e. if $\mu_t$ is a solution of \eqref{PDE} such that $\vert \mu_t \vert (\R^d) \in L^\infty(0,T)$ then
  \begin{equation*}
  \mu_t = \bm X(t,\cdot)_\# \mu_0, \qquad  \, \forall t \in (0,T),
  \end{equation*}
  where $\bm X(t,\cdot)$ is the flow of $\b$, i.e. the unique map solving 
  \begin{equation*}
  \begin{cases}
  \partial_t \bm X(t,x) = \b(t,\bm X(t,x))  & t \in [0,T], x \in \R^d\\ 
  \bm X(0,x) = x & x \in \R^d
  \end{cases}
  \end{equation*}
\end{theorem}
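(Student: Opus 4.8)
By linearity it suffices to treat the homogeneous case: assuming that $\mu_t$ solves \eqref{PDE} with $\mu_0 = 0$ and $|\mu_t|(\R^d) \in L^\infty(0,T)$, I would show that $\mu_t = 0$ for a.e.\ $t$; for a general datum one then subtracts the candidate $\bm X(t,\cdot)_\# \mu_0$, which is itself a solution by the method of characteristics for the unique flow $\bm X$ associated with \eqref{ODE}. Since the total mass $\mu_t(\R^d)$ is conserved it vanishes identically, while $M := \esssup_t |\mu_t|(\R^d) < \infty$. The guiding principle is that the diagonal bound (O) is exactly what controls a symmetric quadratic functional of $\mu_t$, in the same way that it controls $\frac{d}{dt}|\gamma_1(t)-\gamma_2(t)|^2$ in the uniqueness proof for \eqref{ODE}; the whole scheme should therefore culminate in an Osgood--Gronwall inequality.

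Concretely, I would fix an even radial kernel $K(z) = k(|z|)$ adapted to the modulus $\rho$, for which the \emph{interaction energy}
\begin{equation*}
J(t) := \iint_{\R^d\times\R^d} K(x-y)\, d\mu_t(x)\, d\mu_t(y)
\end{equation*}
is a nonnegative functional of the mass-zero measure $\mu_t$ that vanishes if and only if $\mu_t = 0$, with $J(0) = 0$. Differentiating in $t$ along \eqref{PDE} in each variable and symmetrizing, the drift enters only through its increments:
\begin{equation*}
J'(t) = \iint \big(\b(t,x)-\b(t,y)\big)\cdot \nabla K(x-y)\, d\mu_t(x)\, d\mu_t(y).
\end{equation*}
Since $\nabla K(z) = k'(|z|)\,z/|z|$ is parallel to $z$, the integrand equals $k'(|x-y|)\,\langle \b(t,x)-\b(t,y), x-y\rangle/|x-y|$, so (O) bounds it pointwise by $C(t)\,|k'(|x-y|)|\,\rho(|x-y|)$.

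The crux, and the step I expect to be the genuine obstacle, is that $\mu_t$ is \emph{signed}: a pointwise bound on the integrand does \emph{not} bound the integral, precisely because cancellation between the positive and negative parts is the whole difficulty (and, as the present paper shows, is unavoidable without (O)). What is really needed is a bilinear estimate stating that the error kernel is dominated by the energy form itself, i.e.
\begin{equation*}
|J'(t)| \le C(t)\, \Phi\big(J(t)\big) \qquad \text{for a.e.\ } t,
\end{equation*}
with $\Phi$ an Osgood modulus, $\int_0^1 ds/\Phi(s) = +\infty$; achieving this forces one to choose $k$ so that the competition between $k'$ and $\rho$ reproduces, at the level of the quadratic form, the very integrability built into (O) that yields ODE uniqueness. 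Granting such an inequality, Osgood's lemma applied to $J$ with $J(0)=0$ gives $J\equiv 0$, hence $\mu_t\equiv 0$, and the theorem follows. I would regard this bilinear-domination estimate, tying the kernel $K$ to $\rho$, as the heart of the argument. By contrast the alternative dual route — transporting a test function $\psi$ by the flow and integrating by parts — seems to me to break down, since under (O) the flow $\bm X(t,\cdot)$ is only a homeomorphism, so $\psi\circ \bm X(t,\cdot)^{-1}$ is merely continuous and the gradients of its regularizations are not controlled.
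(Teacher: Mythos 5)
A preliminary structural remark: the paper never proves this statement --- it is quoted verbatim as Thm.~1 of \cite{AB} --- so the only in-paper indication of the proof is the Introduction's description of the route taken there: one first establishes a \emph{signed superposition principle} under (O) and (B), i.e.\ every solution with $|\mu_t|(\R^d)\in L^\infty$ is represented by a signed measure $\eta$ concentrated on $\Gamma_\b$, and then uniqueness for \eqref{ODE} (a consequence of the Osgood bound) forces $\eta=(e_0^{-1})_\sharp\mu_0$, whence $\mu_t=\bm X(t,\cdot)_\#\mu_0$. That Lagrangian route is precisely the one you dismiss in your closing sentence. Your Eulerian route, by contrast, has a genuine gap, and it is not a technical one: the bilinear estimate $|J'(t)|\le C(t)\,\Phi(J(t))$, which you yourself call the heart of the argument and then ``grant'', is false in the calibration you propose, namely with $\Phi$ ``reproducing the integrability built into (O)''.

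To see this, test the bilinear form you must bound on the most favourable data: $\rho(s)=s$, for which your calibration demands the Gronwall bound $\Phi(s)\le Cs$ with $C$ determined by the Osgood data. Take $K(z)=e^{-|z|^2}$, let $\mu_n$ have density $u_n(x)=\sin(nx_1)\,e^{-|x|^2}$ (zero total mass, $|\mu_n|(\R^d)\le\pi^{d/2}$), and let $\b_R(x)=\chi(|x|/R)\,x$ be a cutoff of the linear field, which is bounded and Lipschitz \emph{uniformly in $R$}, hence satisfies (O) with $\rho(s)=Ls$ and (B). Since $\widehat{z\cdot\nabla K}(\xi)=\bigl(\tfrac{|\xi|^2}{2}-d\bigr)\hat K(\xi)$, Plancherel gives for the untruncated field $\b(x)=x$
\begin{equation*}
J'=\frac{1}{(2\pi)^d}\int \Bigl(\frac{|\xi|^2}{2}-d\Bigr)\hat K(\xi)\,|\hat u_n(\xi)|^2\,d\xi,
\qquad
J=\frac{1}{(2\pi)^d}\int \hat K(\xi)\,|\hat u_n(\xi)|^2\,d\xi,
\end{equation*}
and the weight $\hat K\,|\hat u_n|^2$ concentrates where $-\tfrac{|\xi|^2}{4}-\tfrac{|\xi\mp ne_1|^2}{2}$ is maximal, i.e.\ near $|\xi|=\tfrac{2}{3}n$. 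Hence $J\approx C_d\,e^{-n^2/6}$ while $J'\approx\tfrac{2n^2}{9}\,J\approx\tfrac{4}{3}\,J\log(1/J)$; choosing $R=R(n)=n$ the truncation changes $J'$ by $O(e^{-n^2/2})$, which is negligible, so the same holds for the admissible field $\b_{R(n)}$ (and if one insists on $\|\b\|_\infty$ uniformly bounded as well, replace $\b_{R(n)}$ by $\b_{R(n)}/R(n)$: the ratio $J'/J\approx 2n/9$ still diverges). Thus $|J'|/J\to\infty$ along a family with uniform Osgood data and uniform mass: no Gronwall inequality can hold, so any valid $\Phi$ must lose at least a logarithmic factor against your calibration. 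This is not an artifact of the Gaussian kernel but the familiar loss-of-regularity phenomenon for transport by rough fields (the reason \cite{Bahouri1994} must let Sobolev exponents decay in time): fixed quadratic functionals of $\mu_t$ are not propagated at a rate controlled by their own size. Whether for \emph{every} Osgood $\rho$ one can choose a kernel $K$ and an Osgood $\Phi$ that absorb this loss is exactly as hard as the theorem itself, and nothing in your proposal addresses it; note that for borderline moduli such as $\rho(s)=s\log(1/s)$ a compounded logarithmic loss would produce $\Phi(s)\approx s\log^2(1/s)$, which violates $\int_0 ds/\Phi(s)=+\infty$, so Osgood's lemma would then yield nothing. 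In short, the step you ask to be granted is the theorem, not a lemma. Finally, your reason for discarding the dual/Lagrangian route is too quick: the diagonal bound (O) is stable under mollification of $\b$ in $x$, because $(x-z)-(y-z)=x-y$, which produces smooth approximating flows with two-sided Osgood moduli uniform in the mollification parameter --- this is the kind of tool that makes the Lagrangian argument of \cite{AB} rigorous without ever differentiating $\psi\circ\bm X(t,\cdot)^{-1}$.
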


Notice that the Osgood assumption (\hyperref[i-osgood]{O}) is an assumption on $\b$, and it is much stronger than an implicit assumption of uniqueness for \eqref{ODE}.
For a simple example one can consider e.g. (for $d=1$) $\b(t,x) = \1_{(-\infty,0]}(x) + 2\cdot\1_{(0,+\infty)}(x)$.
Moreover, according to a theorem of Orlicz \cite{orlicz} (see also \cite[Thm. 1]{Bernard2008}), in the space of all continuous vector fields $\b$ (equiped with the topology of the uniform convergence on compact sets) those fields for which the differential equation \eqref{ODE} has at least one non-uniqueness point is of first category: this shows that in the generic situation Lipschitz/Osgood conditions are \emph{not} necessary for uniqueness.

Let us mention some other generic uniqueness results for \eqref{PDE}.
The one-dimensional case was studied in \cite{BouchotJames98}, where uniqueness of signed measure-valued solutions was obtained under the assumption that $\b$ satisfies a \emph{one-sided Lipschitz condition}, i.e. there exists $\alpha\in L^1(0,T)$ such that $\d_x \b(t, x) \le \alpha(t)$ (in the sense of distributions). Still in $d=1$, uniqueness in the class of absolutely continuous (with respect to Lebesgue measure) solutions was obtained in \cite{Gus18b} for nearly incompressible vector fields. In the multi-dimensional case
uniqueness of absolutely continuous solutions was obtained in \cite{BiaBon17a} for
nearly incompressible vector fields with bounded variation.
For generic solutions, besides \cite{AB}, one can refer to \cite{clop}, where uniqueness within the signed framework is shown for vector fields having an Osgood modulus of continuity.

The generic uniqueness results mentioned above require some \emph{regularity} of $\b$
(e.g. some form of weak differentiability), but as discussed above
one can ask if uniqueness for \eqref{ODE} is sufficient for uniqueness for \eqref{PDE}.
In particular, a natural question (raised in \cite{AB}) is whether uniqueness for \eqref{PDE} (in the class of signed measures) holds in the presence of a (unique) flow of homeomorphisms solving \eqref{ODE}, without an explicit bound like (\hyperref[i-osgood]{O}) on the vector field. 
We show that the answer to this question in general is negative by constructing
two counterexamples of bounded vector fields $\b \colon [0,T]\times \R^d \to \R^d$
(for $d=1$ and $d=2$)
such that for \emph{any} $x\in \R^d$ only $\gamma(t)\equiv x$ ($\forall t\in [0,T]$) solves \eqref{ODE}
but \eqref{PDE} with zero initial condition has a non-trivial measure-valued solution $\{\mu_t\}_{t\in[0,T]}$:
\begin{enumerate}
  \item[(i)] for $d=1$ there exists a nontrivial $[t \mapsto \mu_t] \in L^1([0,T]; \M(\R))$ solving \eqref{PDE} with zero initial condition. However in this example $[t \mapsto \mu_t] \not\in L^\infty([0,T]; \M(\R))$.
  \item[(ii)] for $d=2$ there exists a nontrivial $[t \mapsto \mu_t] \in L^\infty([0,T]; \M(\R^2))$ solving \eqref{PDE} with zero initial condition.
\end{enumerate}

In the examples (i) and (ii) of the present paper the vector field $\b$ is only bounded, but not continuous. However all vector fields that satisfy (\hyperref[i-osgood]{O}) and (\hyperref[i-bounded]{B}) are continuous (see Proposition~\ref{p-o+b=cont}). 
It would therefore be interesting to understand whether
for continuous vector fields uniqueness for \eqref{ODE} implies uniqueness for \eqref{PDE}.

Note that our examples (i) and (ii)
are based on a one-dimensional vector field that does not have integral curves
and hence cannot be continuous (in view of Peano's theorem).
And in fact for $d=1$ it is possible to prove that if $\b$ is stationary and continuous then uniqueness for \eqref{ODE} implies uniqueness for \eqref{PDE} (see Proposition~\ref{p-1d-cont-uniq}).
It is interesting to note that such $\b$ can be very irregular and hence
one cannot apply to it any of the generic uniqueness results discussed earlier.

Let us also mention that (still for $d=1$) if $\b$ is continuous and for any $t$
the function $x\mapsto \b(t,x)$ is non-strictly decreasing then
uniqueness holds both for \eqref{PDE} (this follows from \cite{BouchotJames98}) and for \eqref{ODE}
(this can be shown directly: if $\gamma_1$ and $\gamma_2$
are integral curves of $\b$ such that $\gamma_1(0)=\gamma_2(0)$ and $\gamma_1(t)<\gamma_2(t)$
for all sufficiently small $t>0$ then $\partial_t(\gamma_1(t) - \gamma_2(t)) =\b(t,\gamma_1(t)) - \b(t, \gamma_2(t)) \ge 0$).

\section{Preliminaries}

In the following, we will denote by $\mathscr B(\R^d)$ the Borel $\sigma$-algebra on $\R^d$. We recall some basic definitions.

\begin{definition}
  A family $\{\mu_t\}_{t\in[0,T]}$ of Borel measures on $\R^d$ is called \emph{a Borel family} if for any $A\in \mathscr B(\R^d)$ the map $t \mapsto \mu_t(A)$ is Borel-measurable.
\end{definition}

The following propositions are well-known (see, e.g. \cite[Prop. 2.26 and (2.16)]{AFP}):

\begin{proposition}\label{p-sufficient-family-of-Borel-measures}
  If $\{\mu_t\}_{t\in[0,T]}$ is a family of Borel measures on $\R^d$ such that $t\mapsto \mu_t(A)$ is Borel for any open set $A\subset \R^d$ then $\{\mu_t\}_{t\in[0,T]}$ is a Borel family.
\end{proposition}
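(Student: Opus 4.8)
The plan is to invoke the Dynkin $\pi$--$\lambda$ theorem. I would introduce the collection
\[
\mathcal{G} := \bigl\{ A \in \mathscr{B}(\R^d) : t \mapsto \mu_t(A) \text{ is Borel-measurable on } [0,T] \bigr\}.
\]
By hypothesis $\mathcal{G}$ contains every open subset of $\R^d$, and the open sets form a $\pi$-system (the intersection of two open sets is open) which generates $\mathscr{B}(\R^d)$ by definition of the Borel $\sigma$-algebra. Hence, by Dynkin's theorem, it suffices to verify that $\mathcal{G}$ is a $\lambda$-system; this will give $\mathcal{G} \supseteq \sigma(\text{open sets}) = \mathscr{B}(\R^d)$, i.e. $\mathcal{G} = \mathscr{B}(\R^d)$, which is exactly the assertion.

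First I would check the three defining properties of a $\lambda$-system. Since $\R^d$ is open, $\R^d \in \mathcal{G}$. Next, if $A, B \in \mathcal{G}$ with $A \subseteq B$, finite additivity of each $\mu_t$ gives $\mu_t(B \setminus A) = \mu_t(B) - \mu_t(A)$, and a difference of two Borel functions of $t$ is Borel, so $B \setminus A \in \mathcal{G}$. Finally, if $A_n \in \mathcal{G}$ with $A_n \uparrow A$, continuity from below of each $\mu_t$ yields $\mu_t(A) = \lim_{n} \mu_t(A_n)$ for every $t$, and a pointwise limit of Borel functions is Borel, whence $A \in \mathcal{G}$. Once these three closure properties are in place, Dynkin's theorem closes the argument.

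The hard part is not really an obstacle but rather the one spot where finiteness must be used explicitly: the difference identity $\mu_t(B\setminus A) = \mu_t(B) - \mu_t(A)$ only makes sense when each $\mu_t$ takes finite values, so that the subtraction is well defined. This is guaranteed in the present setting, where the $\mu_t$ are finite (signed) Borel measures, and I would record this finiteness assumption. I should also stress that both facts invoked — finite additivity and continuity from below — hold \emph{verbatim} for finite signed measures and not only for nonnegative ones, so the proof needs neither the Jordan decomposition of $\mu_t$ nor any measurability statement for the individual maps $t \mapsto \mu_t^{\pm}(A)$. An equivalent route would replace the difference axiom by closure under complements, using $\mu_t(A^c) = \mu_t(\R^d) - \mu_t(A)$, together with closure under countable disjoint unions via countable additivity and convergence of partial sums; but the formulation above is the most transparent.
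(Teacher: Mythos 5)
Your proof is correct and complete. One thing to be aware of: the paper does not actually prove this proposition --- it records it as well known and simply cites \cite[Prop.~2.26 and (2.16)]{AFP}, so there is no in-paper argument to compare against; your Dynkin $\pi$--$\lambda$ argument is precisely the standard proof underlying such references. The collection $\mathcal{G}$ of Borel sets $A$ for which $t\mapsto\mu_t(A)$ is Borel contains the open sets (a $\pi$-system generating $\mathscr B(\R^d)$), and your verification of the three $\lambda$-system axioms is sound. You also correctly isolated the only delicate point in the signed setting: since each $\mu_t$ is a \emph{finite} (signed) Borel measure, the subtraction $\mu_t(B\setminus A)=\mu_t(B)-\mu_t(A)$ is well defined, and continuity from below needs no Jordan decomposition, because countable additivity of a finite signed measure already gives a convergent (indeed absolutely convergent, as $|\mu_t|(\R^d)<\infty$) series of the increments $\mu_t(A_{n+1}\setminus A_n)$, whence $\mu_t(A_n)\to\mu_t(A)$ pointwise in $t$ and measurability passes to the limit. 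This makes your write-up a self-contained substitute for the external citation, which is arguably a small improvement in a paper whose main objects are signed measure-valued solutions.
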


\begin{proposition}\label{p-family-of-Borel-measures-total-variation}
  If $\{\mu_t\}_{t\in[0,T]}$ is a Borel family then $\{|\mu_t|\}_{t\in[0,T]}$ also is a Borel family.
\end{proposition}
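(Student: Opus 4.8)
The plan is to exhibit $t\mapsto |\mu_t|(A)$ as a pointwise limit of Borel functions of $t$, for each fixed $A\in\mathscr B(\R^d)$. For $n\in\N$ let $\mathcal Q_n$ denote the (countable) partition of $\R^d$ into half-open dyadic cubes of side $2^{-n}$, and set
\begin{equation*}
S_n(t) := \sum_{Q\in\mathcal Q_n} |\mu_t(Q\cap A)|.
\end{equation*}
Since $Q\cap A\in\mathscr B(\R^d)$ and $\{\mu_t\}$ is a Borel family, each map $t\mapsto \mu_t(Q\cap A)$ is Borel, hence so is $t\mapsto |\mu_t(Q\cap A)|$; being the increasing limit of finite partial sums of nonnegative Borel functions, $S_n$ is Borel for every $n$.

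The heart of the matter is the identity
\begin{equation*}
|\mu_t|(A) = \lim_{n\to\infty} S_n(t)\qquad\text{for every } t.
\end{equation*}
The bound $S_n(t)\le|\mu_t|(A)$ is immediate from \cite[(2.16)]{AFP}, since $\{Q\cap A : Q\in\mathcal Q_n\}$ is a countable Borel partition of $A$; moreover $S_n(t)$ is non-decreasing in $n$, because $\mathcal Q_{n+1}$ refines $\mathcal Q_n$ and $|\mu_t(Q\cap A)|\le\sum_{Q'\subset Q}|\mu_t(Q'\cap A)|$ by the triangle inequality. For the reverse inequality I would use the polar decomposition $\mu_t=\sigma_t\,|\mu_t|$, so that $|\sigma_t|=1$ holds $|\mu_t|$-a.e.\ and
\begin{equation*}
S_n(t) = \bigl\| E_n[\sigma_t\ind_A]\bigr\|_{L^1(|\mu_t|)},
\end{equation*}
where $E_n$ is the conditional expectation (with respect to $|\mu_t|$) onto the $\sigma$-algebra generated by $\mathcal Q_n$. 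Since these $\sigma$-algebras generate $\mathscr B(\R^d)$ up to $|\mu_t|$-null sets, the martingale convergence theorem yields $E_n[\sigma_t\ind_A]\to\sigma_t\ind_A$ in $L^1(|\mu_t|)$, whence $S_n(t)\to\|\sigma_t\ind_A\|_{L^1(|\mu_t|)}=|\mu_t|(A)$.

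Granting the identity, the conclusion is immediate: $t\mapsto|\mu_t|(A)$ is a pointwise limit of the Borel functions $S_n$, hence Borel, for every $A\in\mathscr B(\R^d)$, so $\{|\mu_t|\}$ is a Borel family. (Alternatively, by Proposition~\ref{p-sufficient-family-of-Borel-measures} it would suffice to run this argument for open $A$ only, but the dyadic sums are Borel and convergent for arbitrary Borel $A$, so the reduction is not needed.)

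The step I expect to be the main obstacle is the reverse inequality $\liminf_n S_n(t)\ge|\mu_t|(A)$: the upper bound and the measurability of $S_n$ are routine, whereas recovering the \emph{full} total variation from the dyadic sums requires the positive and negative parts of $\mu_t$ to separate at small scales, which is precisely the content of the martingale (equivalently, Besicovitch differentiation) argument above. A purely combinatorial alternative avoiding differentiation would be to approximate, for fixed $t$ and $\e>0$, a near-optimal finite Borel partition of $A$ by finite unions of dyadic cubes in $|\mu_t|$-measure and then disjointify; this works but is more laborious, so I would prefer the martingale route.
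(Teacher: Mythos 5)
Your proof is correct, but it is worth pointing out that the paper gives no proof of this proposition at all: it is stated as well known, with a pointer to \cite[Prop.~2.26 and (2.16)]{AFP}. The argument behind that citation is different from yours and lighter on machinery: for an \emph{open} set $A$ one uses the duality formula
\begin{equation*}
|\mu_t|(A) = \sup\Bigl\{ \int_A \phi \, d\mu_t \;:\; \phi \in C_c(A), \ \|\phi\|_\infty \le 1 \Bigr\},
\end{equation*}
notes that by separability of $C_c(A)$ the supremum may be restricted to a countable family (so that $t \mapsto |\mu_t|(A)$ is a countable supremum of Borel functions, hence Borel), and then invokes Proposition~\ref{p-sufficient-family-of-Borel-measures} to pass from open sets to all Borel sets. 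Your dyadic--martingale argument instead handles an arbitrary Borel set $A$ directly, which, as you observe, makes Proposition~\ref{p-sufficient-family-of-Borel-measures} unnecessary; the price is that the lower bound $\liminf_n S_n(t) \ge |\mu_t|(A)$ requires martingale convergence (equivalently, differentiation of measures), precisely because the dyadic grid must asymptotically separate the mutually singular parts $\mu_t^{\pm}$. Both routes are sound. One point you should make explicit when writing this up: the polar density $\sigma_t$ (equivalently, a Hahn decomposition) is used only for each \emph{fixed} $t$, to prove the pointwise identity $\lim_n S_n(t) = |\mu_t|(A)$; no measurability of $(t,x) \mapsto \sigma_t(x)$ jointly in $(t,x)$ is needed or claimed, since the functions that carry the Borel dependence on $t$ are the $S_n$, which do not involve $\sigma_t$. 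Your argument is already structured so that this is true, but a reader may otherwise suspect a hidden joint-measurability issue.
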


\begin{proposition}\label{p-integrate-wrt-family-of-Borel-measures}
  If $\{\mu_t\}_{t\in[0,T]}$ is a Borel family then for any bounded Borel function $g\colon [0,T]\times\R^d \to \R$
  the map $t\mapsto \int_{\R^d} g(t,x) \, d\mu_t(x)$ is Borel.
\end{proposition}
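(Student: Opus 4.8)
The plan is to reduce the statement to the defining property of a Borel family---measurability of $t\mapsto\mu_t(A)$ for $A\in\mathscr B(\R^d)$---by a monotone-class argument. The obstacle that forces such a reduction is that $g$ couples the two variables $t$ and $x$, so it cannot be plugged directly into the hypothesis; the whole difficulty is to decouple them.

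First I would dispose of indicators. For a Borel set $E\subseteq[0,T]\times\R^d$ write $E_t:=\{x\in\R^d:(t,x)\in E\}$ for its $t$-section, so that $\int_{\R^d}\1_E(t,x)\,d\mu_t(x)=\mu_t(E_t)$, and let $\mathcal L$ be the collection of Borel sets $E$ for which $t\mapsto\mu_t(E_t)$ is Borel. On measurable rectangles $E=B\times A$ (with $B\in\mathscr B([0,T])$, $A\in\mathscr B(\R^d)$) one has $\mu_t(E_t)=\1_B(t)\,\mu_t(A)$, which is Borel by hypothesis; hence $\mathcal L$ contains the $\pi$-system of rectangles, and this $\pi$-system generates $\mathscr B([0,T]\times\R^d)$.

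Next I would verify that $\mathcal L$ is a Dynkin ($\lambda$-)system. It contains the whole space since $t\mapsto\mu_t(\R^d)$ is Borel; it is stable under complementation because $(E^c)_t=(E_t)^c$ and $\mu_t((E_t)^c)=\mu_t(\R^d)-\mu_t(E_t)$ by finite additivity of the finite (possibly signed) measure $\mu_t$; and it is stable under countable disjoint unions, for if the $E^{(n)}$ are pairwise disjoint then countable additivity gives $\mu_t\big(\bigcup_n E^{(n)}_t\big)=\sum_n\mu_t(E^{(n)}_t)$, a series of Borel functions of $t$ that converges absolutely (its partial sums are dominated by $|\mu_t|(\R^d)<\infty$) and whose sum is therefore Borel. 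By Dynkin's $\pi$--$\lambda$ theorem $\mathcal L=\mathscr B([0,T]\times\R^d)$, so the claim holds for all $\1_E$, and then for all Borel simple functions by linearity of the integral.

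Finally I would pass to an arbitrary bounded Borel $g$ by uniform approximation: pick Borel simple functions $g_n$ with $\|g_n-g\|_\infty\to0$. For each fixed $t$, $\big|\int_{\R^d}(g_n-g)\,d\mu_t\big|\le\|g_n-g\|_\infty\,|\mu_t|(\R^d)\to0$, so $t\mapsto\int_{\R^d}g(t,x)\,d\mu_t(x)$ is a pointwise limit of the Borel maps $t\mapsto\int_{\R^d}g_n\,d\mu_t$ and is hence Borel. The point deserving care is that $|\mu_t|(\R^d)$ need not be uniformly bounded in $t$, so the convergence of the integrals is only pointwise in $t$; since pointwise limits of Borel functions are Borel, this suffices, and the same remark is why the convergence arguments are run at each fixed $t$ via dominated convergence rather than globally. (Equivalently, the entire argument can be packaged as one application of the functional monotone class theorem to the multiplicative system $\{\1_B(t)\1_A(x)\}_{B,A}$.)
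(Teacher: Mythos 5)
Your proof is correct. Note that the paper itself does not prove this proposition: it is stated as well-known with a pointer to \cite[Prop.~2.26 and (2.16)]{AFP}, so there is no internal argument to compare against. Your monotone-class proof is precisely the standard route (and essentially the one in the cited reference): rectangles as the generating $\pi$-system, the Dynkin system of sets $E$ for which $t\mapsto\mu_t(E_t)$ is Borel, then simple functions and uniform approximation. The delicate points are all handled correctly: complements and countable disjoint unions are justified using finiteness and countable additivity of the signed measures $\mu_t$ (the series $\sum_n\mu_t(E^{(n)}_t)$ indeed converges absolutely for a finite signed measure), and the final limiting step is run pointwise in $t$, which is exactly what is needed since $|\mu_t|(\R^d)$ need not be bounded uniformly in $t$ --- a relevant remark here, given that the paper's one-dimensional example lives in $L^1$ but not $L^\infty$ in time.
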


In what follows we will write that $[t\mapsto \mu_t] \in L^1((0,T); \mathscr M(\R))$ if 
$\{\mu_t\}_{t\in [0,T]}$ is a a Borel family and
\begin{equation*}
\int_0^T |\mu_t|(\R^d)\, dt < +\infty.
\end{equation*}
If, in addition,
\begin{equation*}
\esssup_{t \in [0,T]} |\mu_t|(\R^d) < +\infty,
\end{equation*}
then we will write $[t\mapsto \mu_t] \in L^\infty((0,T); \mathscr M(\R))$.

In view of Proposition~\ref{p-integrate-wrt-family-of-Borel-measures} the distributional formulation of the continuity equation is well-defined:

\begin{definition}
  A family
  $[t\mapsto \mu_t] \in L^1((0,T); \mathscr M(\R))$
  is called a measure-valued solution of \eqref{PDE}
  if for any $\fhi\in C^1_c([0,T)\times \R^d)$
  \begin{equation}\label{eq-continuity-distrib}
    \int_0^T \int_{\R^d} (\d_t\fhi + \b(t,x) \cdot \nabla_x \fhi (t,x)) \, d\mu_t(x) \, dt 
    = \int_{\R^d} \fhi(0,x)\, d\bar \mu(x).
  \end{equation}
\end{definition}

Even though the distributional formulation of the Cauchy problem for \eqref{PDE} is well-defined for $[t\mapsto \mu_t] \in L^1((0,T); \mathscr M(\R))$, it is much more natural in the class $[t\mapsto \mu_t] \in L^\infty((0,T); \mathscr M(\R))$, because in this class the initial condition can be understood in the sense of traces, considering a weak* continuous representative of $[t\mapsto \mu_t]$. More precisely, we have the following Proposition (for a proof see e.g. \cite[Chapter 1, Prop. 1.6]{tesiphd}).

\begin{proposition}[Continuous representative] \label{prop:trace} Let $\{\mu_t\}_{t\in[0,T]}$ be a Borel family of measures and assume $[t\mapsto \mu_t] \in L^\infty((0,T); \mathscr M(\R))$.  Then there exists a \emph{narrowly continuous} curve $[0,T] \ni t\mapsto \tilde{\mu}_t \in \mathscr M(\R)$ such that $\mu_t = \tilde{\mu}_t$ for a.e. $t \in [0,T]$. \end{proposition}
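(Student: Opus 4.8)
The plan is to construct $\tilde\mu_t$ by testing against a countable dense family of functions, using the equation to control the time-oscillation of each resulting scalar map, and then to recover a measure at every time via Riesz duality. It should be stressed at the outset that the argument uses that $\{\mu_t\}$ solves \eqref{PDE}: it is precisely the continuity equation that forces the maps $t\mapsto\int\zeta\,d\mu_t$ to be Lipschitz, and without such a structural assumption a generic $L^\infty$ Borel family need not admit a continuous representative (e.g. a family exhibiting a genuine jump at some time).

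First I would record the key Lipschitz estimate. Fix $\zeta\in C^1_c(\R)$ and, for $\psi\in C^1_c((0,T))$, insert the product test function $(t,x)\mapsto\psi(t)\zeta(x)$ into the distributional formulation \eqref{eq-continuity-distrib} (the boundary term vanishes because $\psi$ has compact support in the open interval). This shows that
\[
g_\zeta(t):=\int_\R \zeta(x)\, d\mu_t(x)
\]
has distributional derivative $t\mapsto \int_\R \b(t,x)\,\zeta'(x)\, d\mu_t(x)$ on $(0,T)$. Since $\b$ is bounded and $[t\mapsto\mu_t]\in L^\infty((0,T);\M(\R))$, this derivative is essentially bounded by $\Vert\b\Vert_\infty\,\Vert\zeta'\Vert_\infty\,\esssup_{s}|\mu_s|(\R)$, so $g_\zeta$ coincides a.e. with a Lipschitz function $\bar g_\zeta$ on $[0,T]$, with Lipschitz constant controlled by $\Vert\zeta'\Vert_\infty$.

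Next comes a separability/diagonal step. I would pick a countable set $D\subset C^1_c(\R)$ dense in $C_0(\R)$ for the uniform norm and let $N\subset[0,T]$ be the (Lebesgue-null) union over $\zeta\in D$ of the sets where $g_\zeta\neq\bar g_\zeta$. For every $t$ the assignment $\zeta\mapsto\bar g_\zeta(t)$ is linear and, since $|g_\zeta(t)|\le\Vert\zeta\Vert_\infty|\mu_t|(\R)\le C\Vert\zeta\Vert_\infty$ for a.e.\ $t$ and hence (by continuity of $\bar g_\zeta$) for all $t$, it is bounded on $D$ with respect to the sup-norm. It therefore extends to a bounded linear functional on $C_0(\R)$, and by Riesz representation there is a unique $\tilde\mu_t\in\M(\R)$ with $\int\zeta\,d\tilde\mu_t=\bar g_\zeta(t)$ for all $\zeta\in C_0(\R)$ and $|\tilde\mu_t|(\R)\le C$. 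For $t\notin N$ the functionals $\tilde\mu_t$ and $\mu_t$ agree on the dense set $D$, hence $\tilde\mu_t=\mu_t$. Since $t\mapsto\bar g_\zeta(t)$ is continuous for each $\zeta\in D$, the uniform total-variation bound and density upgrade this to continuity of $t\mapsto\int\zeta\,d\tilde\mu_t$ for every $\zeta\in C_0(\R)$, i.e. $t\mapsto\tilde\mu_t$ is weak* continuous.

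The hard part will be promoting weak* continuity to \emph{narrow} continuity, i.e. testing against all $\varphi\in C_b(\R)$ rather than only $C_0(\R)$; this is equivalent to uniform tightness of $\{\tilde\mu_t\}_{t}$, ruling out mass escaping to or returning from infinity. I would extract tightness again from the equation: for a cutoff $\chi_R\in C^1_c(\R)$ with $\chi_R\equiv1$ on $B_R$, $0\le\chi_R\le1$ and $|\chi_R'|\le C/R$, the estimate above yields
\[
\frac{d}{dt}\int_\R \chi_R\, d\tilde\mu_t=\int_\R \b\,\chi_R'\, d\tilde\mu_t,
\qquad\Bigl|\int_\R \b\,\chi_R'\, d\tilde\mu_t\Bigr|\le \frac{C}{R}\,\Vert\b\Vert_\infty\,\esssup_{s}|\mu_s|(\R),
\]
so the truncated masses vary slowly in $t$; coupling this with the tightness of each fixed finite measure and a finite-propagation-speed argument (the support can move at speed at most $\Vert\b\Vert_\infty$) controls the tails $|\tilde\mu_t|(\R\setminus B_R)$ uniformly in $t$. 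The delicate issue, and the main obstacle, is that $|\tilde\mu_t|$ itself does \emph{not} solve a continuity equation, so cancellation between the positive and negative parts must be tracked carefully; this is exactly the point where the structure of \eqref{PDE}, and not merely the $L^\infty$ bound on the total variation, is indispensable. Once uniform tightness is in hand together with weak* continuity, narrow continuity of $t\mapsto\tilde\mu_t$ follows, completing the proof.
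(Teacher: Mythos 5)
Your first two steps are correct, and they are the standard route (note that the paper itself gives no proof of this proposition, deferring to \cite{tesiphd}). You also correctly supply the hypothesis, implicit in the statement as written, that $\{\mu_t\}$ solves \eqref{PDE} --- without it the claim fails for a family with a jump. Testing \eqref{eq-continuity-distrib} with $\psi(t)\zeta(x)$ gives the Lipschitz bound on $t\mapsto\int\zeta\,d\mu_t$; the countable dense family plus Riesz representation produces measures $\tilde\mu_t$ with $|\tilde\mu_t|(\R)\le C$ agreeing with $\mu_t$ a.e.; and the uniform total-variation bound upgrades continuity from $D$ to all of $C_0(\R)$. This yields a \emph{weakly*} continuous representative, i.e.\ continuity in duality with $C_0$.

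The gap is your last step, and it is not fillable: for signed solutions the upgrade from weak* to narrow continuity does not follow from the hypotheses at all. Your proposed mechanism --- ``the support can move at speed at most $\Vert\b\Vert_\infty$'' --- is precisely what fails for signed measures, and that failure is the subject of this paper: Theorems \ref{t-L1} and \ref{t-Linfty} exhibit solutions with bounded $\b$ that are created from nothing, so $|\mu_t|$ obeys no finite-propagation bound, and your cutoff estimate controls only the signed quantity $\int\chi_R\,d\tilde\mu_t$, never the tail $|\tilde\mu_t|(\R\setminus B_R)$. In fact uniform tightness can genuinely fail. Take the ``bubble'' of Theorem \ref{t-Linfty} (a compactly supported $L^\infty$ solution vanishing at both ends of its lifetime); rescaling space and time by $\lambda_n\to 0$ preserves both $\Vert\b\Vert_\infty$ and the total variation, which at mid-life equals a fixed $\delta>0$. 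Place the $n$-th rescaled copy on pairwise disjoint time intervals accumulating at some $t_0$, centered at points $z_n$ with $|z_n|\to\infty$. The superposition is an $L^\infty$ solution with zero initial datum (at most one bubble is alive at any time, and any compactly supported test function meets only finitely many bubbles), it is weakly* continuous, but it admits no narrowly continuous representative: choosing $\varphi\in C_b(\R^2)$, $|\varphi|\le 1$, which near $z_n$ oscillates at the scale of the $n$-th bubble (continuous functions are dense in $L^1(|\mu^{(n)}_{\tau_n}|)$), one gets $\int\varphi\,d\mu_{\tau_n}\ge \delta/2$ along the mid-life times $\tau_n\to t_0$, while $\int\varphi\,d\mu_t=0$ on the (positive-measure) gaps between bubbles, so $t\mapsto\int\varphi\,d\mu_t$ has no continuous representative near $t_0$. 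The honest output of your argument is therefore weak* continuity --- which is what the paper's own prose preceding the proposition refers to (``a weak* continuous representative'') and is all that is needed to read the initial condition as a trace. Narrow continuity does hold for \emph{nonnegative} solutions, where conservation of total mass combined with weak* continuity rules out escape of mass to infinity; for signed solutions that cancellation argument is unavailable, exactly as you suspected.
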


\section{Non-uniqueness in the class 
\texorpdfstring{$L^1((0,T); \mathscr M(\R))$}{L1(0,T;M(R1))}}

In this section we prove the following result:

\begin{theorem}\label{t-L1}
There exist $T>0$, a bounded Borel $\b \colon [0,T] \times \R \to \R$ and $[t\mapsto \mu_t] \in L^1((0,T); \mathscr M(\R))$ satisfying the following conditions:
\begin{enumerate}
  \item[(i)] $\b$ has only constant characteristics, i.e. $\gamma\in \Gamma_\b$ if and only if there exists $x\in \R$ such that $\gamma(t) = x$ for all $t\in[0,T]$; 
  \item[(ii)] $\{\mu_t\}_{t\in [0,T]}$ solves \eqref{PDE} with zero initial condition.
\end{enumerate}
\end{theorem}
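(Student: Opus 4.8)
The plan is to let the rigidity forced by condition (i) dictate the construction. If every constant curve $\gamma\equiv x_0$ solves \eqref{ODE}, then $\b(t,x_0)=0$ for a.e.\ $t$ and \emph{every} $x_0\in\R$, so by Fubini $\b=0$ holds $\L^2$-a.e.\ on $[0,T]\times\R$. This single observation constrains the example severely. Indeed, if $\mu_t$ were absolutely continuous with respect to $\L^1$ for a.e.\ $t$, then the flux $\b\mu_t$ would vanish $(dt\otimes\L^1)$-a.e., and \eqref{eq-continuity-distrib} would collapse to $\d_t\mu_t=0$, forcing $\mu_t\equiv\bar\mu=0$. Likewise, a solution consisting of finitely many atoms moving along absolutely continuous paths must have each moving atom be a genuine nonconstant integral curve of $\b$, which is forbidden by (i). Hence any admissible $\{\mu_t\}$ must be \emph{singular} with respect to Lebesgue measure, concentrated for each $t$ on the $\L^2$-null set $N:=\{(t,x):\b(t,x)\neq0\}$, and it cannot reduce to boundedly many transported Diracs. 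I would record this as the first step, since it explains why the example is necessarily exotic and why it can live in $L^1$ but not in $L^\infty$.

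Next I would build $\b$ and $\mu_t$ by a self-similar scheme on dyadic time blocks $I_n:=(2^{-n-1},2^{-n}]$ accumulating at $t=0$, using as an \emph{engine} a one-dimensional vector field with no integral curves of the type constructed in \cite{Gus18a}, rescaled to each block. On $I_n$ the measure $\mu_t$ is a highly oscillatory, alternating-sign singular measure at spatial scale $\ell_n$ with total variation $|\mu_t|(\R)\sim M_n$; the field $\b$ is supported on the (moving) support of $\mu_t$ and implements a single refinement/mixing step across $I_n$, arranged so that the configuration at the left endpoint of $I_n$ matches the one at the right endpoint of $I_{n+1}$. I would tune $(M_n,\ell_n)$ so that $M_n\to\infty$ while $\sum_n M_n\,|I_n|<+\infty$ and $M_n\ell_n\to0$. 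The first two conditions give total variation blowing up as $t\to0^+$ (so $[t\mapsto\mu_t]\notin L^\infty$) yet integrable in time (so $[t\mapsto\mu_t]\in L^1$), and the last, exploiting the sign cancellation against $C^1$ test functions, forces $\mu_t\to0$ narrowly as $t\to0^+$.

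For (ii) I would verify the weak formulation directly. Inside each block \eqref{eq-continuity-distrib} holds because the rescaled engine solves the continuity equation there. Since the end-configurations match, the curve $t\mapsto\mu_t$ is narrowly continuous across block boundaries and narrowly convergent to $0$ as $t\to0^+$; integrating $\d_t\varphi$ by parts block by block therefore produces no interior source terms and no boundary contribution at $t=0$, while for $\varphi\in C^1_c([0,T)\times\R)$ there is none at $t=T$ either. This yields \eqref{eq-continuity-distrib} with $\bar\mu=0$, i.e.\ the zero initial condition, and the estimate $\int_0^T|\mu_t|(\R)\,dt\sim\sum_n M_n\,|I_n|<+\infty$ together with $\sup_n M_n=+\infty$ records that $[t\mapsto\mu_t]\in L^1((0,T);\mathscr M(\R))\setminus L^\infty((0,T);\mathscr M(\R))$.

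The hard part will be (i), the exclusion of \emph{all} nonconstant characteristics. I would first show that any absolutely continuous $\gamma$ with $\dot\gamma\neq0$ on a positive-measure set of times must, on that set, satisfy $(t,\gamma(t))\in N$ and $\dot\gamma(t)=\b(t,\gamma(t))$, so that its graph threads the moving supports. I would then exploit the self-similar geometry of $N$: a would-be moving curve is forced to descend through infinitely many ever-finer refinement levels as $t\to0$, incurring incompatible branch choices and unbounded effective slope, which is impossible for a curve absolutely continuous on $[0,T]$; on a single block the engine of \cite{Gus18a} already rules out nonconstant integral curves, and the accumulation of blocks handles curves that straddle infinitely many of them. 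The delicate points are curves that sit still on part of their time-domain and move on another, and the fact that $\b$ is only prescribed up to $\L^2$-null sets; the descent argument must be made robust to modifications of $\b$ on null sets and to such hybrid curves. This verification of (i) is the genuine core of the construction, everything else being tuning and routine weak-formulation bookkeeping.
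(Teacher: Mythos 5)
Your structural observations are correct and match the paper's setup: condition (i) forces $\b=0$ at $\L^2$-a.e.\ $(t,x)$, so any nontrivial solution must be singular, and indeed the paper's field is concentrated on a one-dimensional set in spacetime. But there is a genuine gap at exactly the point you yourself flag as the core: the ``engine'' is never constructed. Everything concrete in your scheme (dyadic blocks, tuning of $(M_n,\ell_n)$, telescoping boundary terms, narrow convergence at $t=0^+$) is bookkeeping around an unspecified object, namely a pair $(\b,\mu_t)$ on a single time interval such that $\b$ is concentrated on a spacetime-null set, $\mu_t$ is a nontrivial signed solution, and $\b$ has \emph{no} nonconstant characteristics. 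Your appeal to \cite{Gus18a} cannot fill this in as stated: the field there is of the form $\1_P-\1_N$, nonzero a.e., and has $\Gamma_\b=\emptyset$; in particular it violates the ``if'' direction of (i), since constant curves are not characteristics of it. The moment you concentrate the field onto the moving support of $\mu_t$ to restore the constant characteristics (as you propose), the nonexistence result of \cite{Gus18a} no longer applies, because the characteristic structure is now governed entirely by the geometry of that null set. Your substitute argument --- ``descent through infinitely many refinement levels'', ``incompatible branch choices'', ``unbounded effective slope'' --- is a heuristic, not a proof, and it does not even address curves that move only within a single block and never approach $t=0$.

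For comparison, the paper resolves precisely this point with one construction and one short lemma. It sets $f(\tau)=2+\int_0^\tau(\1_P-\1_N)\,dr$, so that $f$ is $1$-Lipschitz with $|f'|=1$ a.e.\ and monotone on no interval; the solution is the signed sum of Diracs on level sets, $\tilde\mu_t=\sum_{x\in f^{-1}(t)}\sign(f'(x))\,\delta_x$, and the field is $\b=\1_{F[0,1]}\cdot(1/f')$, concentrated on the graph $F[0,1]=\{(f(x),x):x\in[0,1]\}$. Constant curves are characteristics because each vertical line $\{x=x_0\}$ meets this graph in at most one time instant. Nonconstant characteristics are excluded by Lemma~\ref{l-uniqueness-of-characteristics-1D} via a two-cone incompatibility: if a characteristic passed through two graph points $(f(x),x)$ and $(f(y),y)$, then $\|\b\|_\infty\le 1$ forces $|x-y|\le|f(x)-f(y)|$, while the fact that both $P$ and $N$ have positive measure inside $(x,y)$ produces cancellation, $|f(x)-f(y)|=\bigl||P\cap(x,y)|-|N\cap(x,y)|\bigr|<|x-y|$, a contradiction. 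The endpoint defect $\delta_{F(0)}-\delta_{F(1)}$ is repaired by adding stationary Heaviside-in-time Diracs, membership in $L^1$ follows from the area formula ($\int_0^T\#f^{-1}(t)\,dt=\int_0^1|f'|\,dx=1$), and the zero initial condition is trivial because $\mu_t\equiv 0$ for $t<1$: no accumulation of blocks at $t=0^+$ and no narrow-limit argument are needed, since mass is created and annihilated in the middle of the time interval by sign cancellation. Until you produce an explicit engine together with an analogue of this cone/contraction lemma, the proof of (i) --- and with it the theorem --- is missing.
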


\subsection{Auxiliary result}
We begin by the following auxiliary result: although it is well-known, we give a proof because some details will be used later.

\begin{lemma}\label{l-sets-P-and-N}
  There exist Borel sets $P, N \subset \R$ such that
  \begin{enumerate}
    \item $P \cap N = \emptyset$;
    \item $|\R \setminus (P \cup N)| = 0$;
    \item for any nonempty bounded open interval $I\subset \R$ it holds that $|I\cap P| > 0$ and $|I\cap N|>0$,
  \end{enumerate}
  where $|A|$ denotes the Lebesgue measure of $A\subset \R$.
\end{lemma}

\begin{proof}
  Let $\{q_k\}_{k\in \N}$ be the set of all rational numbers.
  Let $f_0(x):=1$ ($x\in \R$), $E_0 := \{0, 1\}$ and $\e_0:=1$.
  
  Consider $k\in \N$ and suppose that the set $E_{k-1}$, the number $\e_{k-1}>0$ and the function $f_{k-1}$ are already constructed. We assume that $E_{k-1}$ is finite, $E_{k-1} \cap \Q = \emptyset$, hence $(0,1) \setminus E_{k-1}$
  is a union of finitely many open intervals. We also assume that $f_{k-1}$ is either $+1$ or $-1$ on each of these intervals.
  
  Since $\dist(q_k, E_{k-1})>0$ there exists $\e_k>0$ such that 
  \begin{equation}\label{e-I-pm-eps-k}
  \e_k < 2^{-k} \e_{k-1},
  \end{equation}
  \begin{equation}
  \left(q_k - \e_k, q_k + \e_k\right) \subset \R\setminus E_{k-1},
  \end{equation}
  and moreover
  \begin{equation}\label{e-I-pm-I-k-bdry}
  q_k \pm \frac12 \e_k \notin \Q.
  \end{equation}
  
  We then define
  \begin{equation}\label{e-I-pm-I-k}
  I_k := \left(q_k - \frac12 \e_k, q_k + \frac12 \e_k\right) \subset \R\setminus E_{k-1}
  \end{equation}
  and
  \begin{equation}\label{eq:def_f_k}
  f_k(x) := \begin{cases}
  f_{k-1}(x), & x\notin \overline{I_k}, \\
  -f_{k-1}(x), & x\in I_k, \\
  0, & x\in \partial I_k
  \end{cases}
  \end{equation}
  and $E_k := E_{k-1}\cup \partial I_k$.
  It is easy to see that $E_k, \e_k$ and $f_k$ satisfy the same assumptions as $E_{k-1}, \e_{k-1}$ and $f_{k-1}$.
  Therefore we can construct inductively the sequence $\{E_k, \e_k, f_k\}_{k\in \N}$.
  
  Consider the set $R_k := \cup_{n=k+1}^\infty \overline{I_n}$ 
  on which the function $f_n$ ($n>k$) may differ from $f_k$. By \eqref{e-I-pm-eps-k}
  \begin{equation}\label{e-I-pm-R-k}
  |R_k| \le \sum_{n=k+1}^\infty \e_{n} = \sum_{n=k}^\infty \e_{n+1} < \sum_{n=k}^\infty 2^{-(n+1)} \e_{n}
  < \e_k \sum_{n=k}^\infty 2^{-(n+1)} \le \frac{1}{2} \e_k.
  \end{equation}
  For any $x \in I_0 \setminus R_k$ it holds that $f_{n}(x) = f_k(x)$ for all $n > k$.
  Since $\e_k \to 0$ as $k\to \infty$, we conclude that $f_k$ converges a.e. to some function $f\colon \R \to \R$ as $k \to \infty$.
  Moreover, on the complement of Lebesgue negligible set
  $\bigcap_{k\in\N} R_k$ the function $f$ by construction takes only the values
  $\pm 1$.
  We therefore set 
  \begin{equation}
  P := f^{-1}(\{+1\}), \quad N := f^{-1}(\{-1\}).
  \end{equation}
  
  Consider an arbitrary nonempty bounded open $I \subset \R$. There always exists a nonempty open interval $J$ such that $\overline J \subset I$. Since $J$ contains infinitely many rationals and $\e_k\to 0$ as $k\to \infty$, there exists $k_0 \in \N$ such that $(q_k - \e_k, q_k + \e_k) \subset I$ whenever $k>k_0$. 

  Without loss of generality let us assume that $f_{k-1} = +1$ on $(q_k - \e_k, q_k + \e_k)$ (the argument is the same when this value is $-1$).
  Hence by construction
  \begin{equation*}
  f_k(x) = 
  \begin{cases}
  f_{k-1}(x) = +1, & x \in (q_k - \e_k, q_k + \e_k) \setminus \overline{I_k}, \\
  -f_{k-1}(x) = -1, & x \in I_k.
  \end{cases}
  \end{equation*}
  Ultimately, by \eqref{e-I-pm-R-k} the function $f$ may differ from $f_k$ only on the set $R_k$ and $|R_k|< \frac12 \e_k$.
  Therefore $|I\cap P| \ge |(q_k - \e_k, q_k + \e_k)| - |R_k| \ge \eps_k$
  and $|I\cap N| \ge |(q_{k+1} - \e_{k+1}, q_{k+1} + \e_{k+1})| - |R_{k+1}| \ge \eps_{k+1}$.
\end{proof}

\subsection{The construction of the counterexample}

Given the sets $P,N \subset \R$ constructed in Lemma \ref{l-sets-P-and-N} we now set 
\begin{equation}\label{eq:def_f}
f(\tau) := 2+\int_0^\tau (\1_P(r) - \1_N(r)) \, dr \quad \text{ and } \quad
F(\tau) := (f(\tau), \tau)
\end{equation}
where $\tau \in [0,1]$. 
Since the derivative of $f$ is equal to $\1_P - \1_N$ a.e.,
for convenience we denote $f' := \1_P - \1_N$.

We now set $T:=4$ and define 
\begin{equation}\label{e-def-b}
\b(t,x) := \1_{F[0,1]}(t,x) \cdot \frac{1}{f'(x)}
\quad\text{and}\quad
\tilde\mu_t := \sum_{x\in f^{-1}(t)} \sign(f'(x)) \delta_x.
\end{equation}
By definition $\b$ is Borel and bounded.
Moreover by the area formula $\{\tilde\mu_t\}_{t\in[0,T]}$ is
a measurable family of Borel measures.

\begin{lemma}
For $\b$ and $\tilde\mu_t$ defined above
\begin{equation*}
\d_t \tilde\mu_t + \dive (\b \tilde\mu_t) = - \delta_{F(1)} + \delta_{F(0)}
\qquad \text{in } \quad \mathscr D'((0,T)\times \R).
\end{equation*}
\end{lemma}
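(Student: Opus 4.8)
The plan is to establish the identity in its weak (distributional) form. Testing the continuity operator against an arbitrary $\varphi \in C^\infty_c((0,T)\times\R)$ and integrating by parts, the claim $\d_t\tilde\mu_t + \dive(\b\tilde\mu_t) = -\delta_{F(1)} + \delta_{F(0)}$ is equivalent to
\[
\int_0^T\!\!\int_\R \big(\d_t\varphi(t,x) + \b(t,x)\,\d_x\varphi(t,x)\big)\, d\tilde\mu_t(x)\, dt = \varphi(F(1)) - \varphi(F(0)).
\]
Here $F(0)=(2,0)$ and $F(1)=(f(1),1)$ are regarded as points of spacetime; the choice $T=4$ ensures $f([0,1])\subset[1,3]\subset(0,T)$, so both endpoints lie in the interior $(0,T)\times\R$ and the Dirac sources are genuine elements of $\mathscr{D}'((0,T)\times\R)$.

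First I would unfold the definition $\tilde\mu_t=\sum_{x\in f^{-1}(t)}\sign(f'(x))\delta_x$ to rewrite the left-hand side as
\[
\int_0^T \sum_{x\in f^{-1}(t)} \sign(f'(x))\big(\d_t\varphi(t,x) + \b(t,x)\,\d_x\varphi(t,x)\big)\, dt.
\]
The key observation is that every $x$ appearing in the support of $\tilde\mu_t$ satisfies $x\in[0,1]$ and $f(x)=t$, so the point $(t,x)=F(x)$ lies on $F[0,1]$; consequently $\1_{F[0,1]}(t,x)=1$ and $\b(t,x)=1/f'(x)$ there. I would then invoke the area (coarea) formula for the $1$-Lipschitz map $f\colon[0,1]\to\R$, which for any bounded Borel $\psi$ reads $\int_\R \sum_{x\in f^{-1}(t)}\psi(x)\, dt = \int_0^1 \psi(x)\,|f'(x)|\, dx$; the integrands here are continuous with compact support, hence bounded and Borel, so the formula applies to each of the two pieces.

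Using the elementary a.e.\ identities $\sign(f'(x))\,|f'(x)|=f'(x)$ and $\sign(f'(x))\,\tfrac{1}{f'(x)}\,|f'(x)|=1$ --- legitimate since $|f'|=1$ off a Lebesgue-null set and $\sign(f')=0$ exactly where $f'=0$, a set carrying no mass --- the double integral collapses to
\[
\int_0^1 \Big(f'(x)\,\d_t\varphi(f(x),x) + \d_x\varphi(f(x),x)\Big)\, dx.
\]
Finally I would recognize this integrand as the a.e.\ derivative of the Lipschitz function $x\mapsto\varphi(F(x))=\varphi(f(x),x)$: the chain rule for a $C^1$ map composed with a Lipschitz map gives exactly $\d_t\varphi(f(x),x)f'(x)+\d_x\varphi(f(x),x)$, and since $x\mapsto\varphi(F(x))$ is absolutely continuous the fundamental theorem of calculus yields $\varphi(F(1))-\varphi(F(0))$, completing the proof.

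The main obstacle is the bookkeeping at the interface of the three ingredients rather than any single deep step: one must check that the indicator collapses to $1$ on the support so that $\b$ may be replaced by $1/f'$, apply the signed change of variables correctly (accounting for $|f'|=1$ a.e.\ and the null set where $f'$ vanishes carrying no mass), and justify the chain rule for the Lipschitz composition $\varphi\circ F$. Each ingredient is standard, but the manipulations tying together $\sign(f')$, $|f'|$ and $1/f'$ are precisely where a sign error would most easily arise.
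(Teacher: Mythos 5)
Your proof is correct and follows essentially the same route as the paper's: unfold $\tilde\mu_t$ as a sum of signed Diracs over $f^{-1}(t)$, apply the area formula to convert the $t$-integral into an $x$-integral over $[0,1]$ (with the sign and $|f'|$ factors collapsing as you describe), and recognize the resulting integrand as $\d_x\left(\varphi(f(x),x)\right)$ so that the fundamental theorem of calculus yields $\varphi(F(1))-\varphi(F(0))$. The paper's proof is exactly this computation written more compactly; your extra care with the null sets, the location of $F(0)$ and $F(1)$ inside $(0,T)\times\R$, and the Lipschitz chain rule only makes the argument more explicit.
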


\begin{proof}
Using the area formula (since $f([0,1]) \subset (0,T)$) we get
\begin{align*}
\int_0^T\left( \int_{\R}\left( \d_t \fhi + \b \, \d_x \fhi \right)\, d\tilde\mu_t(x)\right)\, dt &=
\int_0^T\left( \sum_{x\in f^{-1}(t)}\left( (\d_t \fhi)(t,x) + \frac{1}{f'(x)} \, (\d_x \fhi)(t,x) \right) \frac{f'(x)}{|f'(x)|}\right) \,dt
\\&=
\int_0^1\left( (\d_t \fhi)(f(x),x) + \frac{1}{f'(x)} \, (\d_x \fhi)(f(x),x) \right) f'(x) \,dx
\\&=
\int_0^1\left( f'(x) (\d_t \fhi)(f(x),x) + (\d_x \fhi)(f(x),x) \right) \,dx
\\&=
\int_0^1\d_x \left( \fhi(f(x),x) \right) \,dx
=\fhi(f(1),1) - \fhi(f(0),0). \qedhere
\end{align*}
\end{proof}

To get rid of the defect $- \delta_{F(1)} + \delta_{F(0)}$ we simply add to $\tilde\mu_t$ solutions concentrated on constant in time trajectories (since $\b$ is 0 outside $F([0,1])$). More precisely, one readily checks that
\begin{equation*}
\mu_t := \tilde \mu_t + \1_{[f(1),+\infty)}(t) \, \delta_1 - \1_{[f(0),+\infty)}(t) \, \delta_0
\end{equation*}
solves \eqref{PDE}.

\begin{figure}
  \def\svgwidth{.7\columnwidth}
  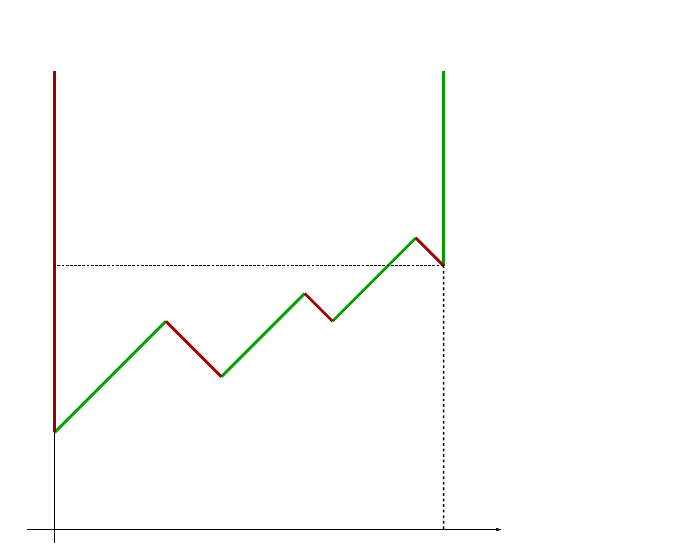 
  \caption{Graph of the function $t=f(x)$ (approximation step). At each $t \in [0,T]$ the measure $\tilde{\mu}_t$ is a superposition of Dirac masses with weight given by $\sign f^\prime(x)$, where $x \in f^{-1}(t)$ (notice the red/green parts).}
  \label{fig:}
\end{figure}

To conclude the proof of Theorem~\ref{t-L1}, it remains to study the integral curves of $\b$. This issue is addressed in the following Lemma:
\begin{lemma}\label{l-uniqueness-of-characteristics-1D}
  For any $(t,x)$ there exists a unique characteristic of $\b$ passing through $x$.
\end{lemma}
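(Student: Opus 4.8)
The plan is to prove that the only characteristics of $\b$ are the constant ones; existence of the constant characteristic through a prescribed point is immediate, and uniqueness then reduces to showing that every characteristic is constant. For existence, observe that for fixed $x$ the function $s\mapsto\b(s,x)=\1_{F[0,1]}(s,x)/f'(x)$ is nonzero at most at the single time $s=f(x)$ (and only if $x\in[0,1]$); since this is a Lebesgue-null set of times, $\int_{t}^{s}\b(r,x)\,dr=0$ for every $s$, so the constant curve $\gamma\equiv x$ solves \eqref{ODE} and passes through $(t,x)$. It remains to show that every characteristic is constant.

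So let $\gamma$ be a characteristic. Since $|\b|\le 1$ a.e., $\gamma$ is $1$-Lipschitz, hence absolutely continuous with $\dot\gamma(s)=\b(s,\gamma(s))$ for a.e.\ $s$. Put $B:=\{s:\dot\gamma(s)\neq 0\}$; if $|B|=0$ then $\gamma$ is constant and we are done, so assume $|B|>0$ and seek a contradiction. For a.e.\ $s\in B$ we have $\b(s,\gamma(s))\neq 0$, which forces $(s,\gamma(s))\in F[0,1]$, i.e.\ $\gamma(s)\in[0,1]$ and $s=f(\gamma(s))$; moreover $\dot\gamma(s)=1/f'(\gamma(s))=f'(\gamma(s))\in\{\pm1\}$, because $f'=\1_P-\1_N$ takes only the values $\pm1$ on $P\cup N$.

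The heart of the argument is to convert the relation $s=f(\gamma(s))$ into a monotonicity statement and then contradict it using property (3) of Lemma~\ref{l-sets-P-and-N}. To this end I would set $h(s):=s-f(\gamma(s))$, which is Lipschitz, and invoke the chain rule for the Lipschitz composition $f\circ\gamma$ to get $h'(s)=1-f'(\gamma(s))\dot\gamma(s)$ for a.e.\ $s$, with the convention that the product vanishes wherever $\dot\gamma(s)=0$. On $B$ the computation above gives $f'(\gamma(s))\dot\gamma(s)=1$, so $h'=0$ there, while off $B$ one has $h'=1$; thus $h'=\1_{B^c}\ge 0$, so $h$ is continuous and non-decreasing, and $h(s)=0$ for a.e.\ $s\in B$. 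Choosing $s_1<s_2$ in $B$ with $h(s_1)=h(s_2)=0$ and integrating $h'$ gives $|B^c\cap(s_1,s_2)|=h(s_2)-h(s_1)=0$; letting $s_1\downarrow\inf B$ and $s_2\uparrow\sup B$ shows that $B$ coincides, up to a null set, with the open interval $(a,b)$, where $a:=\inf B$, $b:=\sup B$. On $(a,b)$ the continuous functions $s$ and $f(\gamma(s))$ agree a.e., hence everywhere, so $\gamma|_{(a,b)}$ is a continuous injection and therefore strictly monotone. Writing $(c,d):=\gamma((a,b))\subseteq[0,1]$, the identity $f\circ\gamma=\mathrm{id}$ shows that $f|_{(c,d)}=(\gamma|_{(a,b)})^{-1}$ is strictly monotone on $(c,d)$. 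This is impossible: by Lemma~\ref{l-sets-P-and-N}(3) both $|P\cap(c,d)|>0$ and $|N\cap(c,d)|>0$, so $f'=+1$ and $f'=-1$ each on a set of positive measure inside $(c,d)$, contradicting monotonicity of $f|_{(c,d)}$. Hence $|B|=0$ and $\gamma$ is constant.

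I expect the main obstacle to be the rigorous justification of the chain rule for $h=\,\cdot\,-f\circ\gamma$: since $f$ is merely Lipschitz, one must check that the null set on which $f$ fails to be differentiable is not seen by $\gamma$ on $\{\dot\gamma\neq 0\}$ (which follows from the area formula, $\gamma$ being Lipschitz and the bad set Lebesgue-null) and that $(f\circ\gamma)'=0$ a.e.\ on $\{\dot\gamma=0\}$. Once this is in place, the reduction of $B$ to an interval and the final clash with the nowhere-monotonicity of $f$ supplied by Lemma~\ref{l-sets-P-and-N}(3) are straightforward.
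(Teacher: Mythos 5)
Your proof is correct, but it takes a genuinely different route from the paper's. The paper first observes that, $F([0,1])$ being closed, $\b$ vanishes near any point off the graph, so everything reduces to showing that a characteristic can meet $F([0,1])$ at most once; two intersection points $x<y$ are then excluded by playing two metric inequalities against each other: since $\|\b\|_\infty\le 1$ the curve satisfies $|x-y|=|\gamma(f(x))-\gamma(f(y))|\le |f(x)-f(y)|$ \eqref{e-gamma-is-in-the-vertical-cone}, while the cancellation between $P$ and $N$ (property (3) of Lemma~\ref{l-sets-P-and-N}) makes $f$ a strict contraction, $|f(x)-f(y)|<|x-y|$ \eqref{e-f-is-in-the-horizontal-cone} --- a two-line ``cone'' argument requiring no differentiation along the curve. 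You instead run a differential analysis of an arbitrary characteristic: via the Lipschitz chain rule (justified, as you correctly note, through the area formula applied to the null set where $f$ fails to be differentiable or where its derivative differs from $\1_P-\1_N$), the function $h(s)=s-f(\gamma(s))$ has $h'=\1_{B^c}\ge 0$ and vanishes a.e.\ on $B=\{\dot\gamma\neq 0\}$, whence $B$ is essentially an interval on which $f\circ\gamma=\mathrm{id}$; thus $f$ would be strictly monotone on a nondegenerate interval, which is incompatible with Lemma~\ref{l-sets-P-and-N}(3). Both arguments ultimately consume the same property of $P$ and $N$, but differently: the paper's version is shorter and free of a.e.-differentiability bookkeeping, whereas yours exposes more structure --- it shows directly that $\{\dot\gamma\neq 0\}$ would have to be an interval on which $\gamma$ inverts $f$, which is exactly the mechanism producing non-uniqueness when $f$ \emph{is} monotone somewhere (compare the Remark closing Section 3), and it dovetails with the monotonicity theme of Lemma~\ref{l-somewhere-monotone}. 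Two small points to make explicit in a full write-up: $h$ requires $f$ to be defined on all of $\R$, so either extend it by the same integral formula \eqref{eq:def_f} (harmless, since a.e.\ point of $B$ has $\gamma(s)\in[0,1]$) or localize the argument; and the identity $\dot\gamma=1/f'(\gamma)=f'(\gamma)$ on $B$ uses the convention that $\b=0$ at the (null) set of points of $F([0,1])$ lying over $\R\setminus(P\cup N)$, which is implicit in the paper's claim that $\b$ in \eqref{e-def-b} is bounded.
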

\begin{proof}
  Clearly points $\gamma(t)=x$, $t>0$, are characteristics of $\b$. Since the image of $[0,1]$ under $F$ is closed,
  $b$ vanishes identically in a neighbourhood of any $(t,x) \notin F([0,1])$. Therefore for $(t,x) \notin F([0,1])$
  the claim is trivial.
  
  Hence it is sufficient to prove that any characteristic $\gamma=\gamma(t)$ of $\b$ intersects $F([0,1])$ at most in one point.
  We argue by contradiction: suppose there exist $x<y$ such that
  \begin{equation}
  \gamma(f(x)) = x \quad \text{and} \quad \gamma(f(y)) = y.
  \end{equation}
  Since $\gamma' = \b(t, \gamma)$ and $\|\b\|_\infty \le 1$ it holds that
  \begin{equation}\label{e-gamma-is-in-the-vertical-cone}
  |x-y| = |\gamma(f(x)) - \gamma(f(y))| \le |f(x)-f(y)|
  \end{equation}
  On the other hand, by properties of the sets $P$ and $N$
  \begin{equation}\label{e-f-is-in-the-horizontal-cone}
  |f(y) - f(x)| \le \int_x^y |\1_P(z) - \1_N(z)| \, dz < |x-y|.
  \end{equation}
  The inequalities \eqref{e-gamma-is-in-the-vertical-cone} and \eqref{e-f-is-in-the-horizontal-cone} are not compatible,
  hence the proof is complete.
\end{proof}

Therefore we have constructed a vector field $\b$ for which the characteristics are unique, but there exists a nontrivial signed solution of the CE. Using a minor modification of the present construction one can construct a similar example of $(\mu_t, b)$ having compact support in spacetime. 

\begin{remark}
  The constructed solution $\{\mu_t\}$ is not a superposition solution (see Introduction).
\end{remark}

\begin{remark} As we have already remarked in Section 2, the distributional formulation of the Cauchy problem for \eqref{PDE} is well-defined for $[t\mapsto \mu_t] \in L^1((0,T); \mathscr M(\R))$ but it is best suited  in the class $[t\mapsto \mu_t] \in L^\infty((0,T); \mathscr M(\R))$, because of Proposition \ref{prop:trace}. Unfortunately for the present construction it holds that
  $[t\mapsto \mu_t] \notin L^\infty((0,T); \mathscr M(\R))$, as we show below in the remaining part of this section.
  However in the next section we present a two-dimensional analog of Theorem~\ref{t-L1} with the additional property that
  $[t\mapsto \mu_t] \in L^\infty((0,T); \mathscr M(\R^2))$.
\end{remark}

\begin{lemma}\label{l-somewhere-monotone}
Let $g\in \Lip((0,1))$ be such that $g'\ne 0$ a.e.
and
\begin{equation*}
\esssup_{t\in \R} \#(g^{-1}(t)) < \infty.
\end{equation*}
Then there exists a nonempty open interval $I\subset (0,1)$
such that $g$ is strictly monotone on $I$.
\end{lemma}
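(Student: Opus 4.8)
The plan is to argue by contradiction: assuming that $g$ is not strictly monotone on any subinterval, I will produce a positive-measure set of levels with more preimages than the essential bound allows, contradicting the hypothesis on $\#(g^{-1}(t))$.

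First I would reduce the statement to a purely topological property of $g$. Set $N:=\esssup_{t}\#(g^{-1}(t))<\infty$, so that $\#(g^{-1}(t))\le N$ for a.e.\ $t$. Since $g$ is Lipschitz it is absolutely continuous, hence $g(b)-g(a)=\int_a^b g'$ for $a<b$. Because $g'\ne 0$ a.e., if $g$ were merely nondecreasing on some interval $I$ then $g'\ge 0$ a.e.\ on $I$, hence $g'>0$ a.e.\ on $I$, and then $g(b)-g(a)=\int_a^b g'>0$ for all $a<b$ in $I$; thus ``nondecreasing on $I$'' already forces ``strictly increasing on $I$'' (and symmetrically for the decreasing case). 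Consequently, the assumption that $g$ is strictly monotone on no subinterval is equivalent to: $g$ is non-monotone on every nonempty open $J\subseteq(0,1)$. In particular $g$ is non-constant on every such $J$, and a continuous non-monotone function on an interval always exhibits a strict bump, i.e.\ there are $u<v<w$ in $J$ with $g(v)>\max\{g(u),g(w)\}$ or $g(v)<\min\{g(u),g(w)\}$.

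The core of the argument is the following claim, which I would prove by induction on $n$: for every nonempty open $J\subseteq(0,1)$ there exist a nonempty open interval $(\alpha,\beta)$ and $n$ pairwise disjoint closed subintervals $I_1,\dots,I_n\subseteq J$ with $g(I_k)\supseteq[\alpha,\beta]$ for each $k$. The base case $n=1$ is immediate from the non-constancy of $g$ on $J$. For the inductive step I would apply the claim to obtain $I_1,\dots,I_n$ and $(\alpha,\beta)$, and then split one of them, say $I_1$: choosing an interior point $p\in I_1$ with $g(p)\in(\alpha,\beta)$ and a small window $W\ni p$, $W\subseteq I_1$, with $g(W)\subset(\alpha,\beta)$, the strict bump of $g$ inside $W$ yields two disjoint closed subintervals of $W$ whose images both contain a nondegenerate $[\alpha',\beta']\subseteq(\alpha,\beta)$. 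Since $I_2,\dots,I_n$ already cover $[\alpha,\beta]\supseteq[\alpha',\beta']$, replacing $I_1$ by these two subintervals produces $n+1$ pairwise disjoint closed intervals in $J$, all covering the common value range $[\alpha',\beta']$, which is exactly the claim for $n+1$.

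Finally, applying the claim with $J=(0,1)$ and $n=N+1$ yields $N+1$ pairwise disjoint closed intervals and an open value range $(\alpha,\beta)$ whose values are attained in each of them; hence every $t\in(\alpha,\beta)$ has at least $N+1$ distinct preimages, one per interval. As $(\alpha,\beta)$ has positive Lebesgue measure, this contradicts $\#(g^{-1}(t))\le N$ a.e., completing the proof. I expect the main obstacle to be the inductive step: the delicate point is to increase the number of disjoint intervals by one while preserving a common nondegenerate range of values, which forces the new bump to be manufactured inside a value-window strictly contained in the previously achieved range, and the two resulting intervals to be separated (by shrinking the top value slightly) so as to remain genuinely disjoint.
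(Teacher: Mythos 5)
Your proof is correct, and it takes a genuinely different route from the paper's. The paper argues directly rather than by contradiction: using the area formula it discards the $g$-image of the null set where $g'$ vanishes or fails to exist, picks a level $t$ of maximal essential multiplicity $M$, all of whose $M$ preimages are then ``regular'' points, surrounds these preimages by disjoint intervals $J_i$, and shows by an intermediate-value counting argument that every level in $[t-\varepsilon,t+\varepsilon]$ is attained exactly once in each $J_i$; injectivity plus continuity then gives strict monotonicity on $J_i$. The delicate step there is upgrading the bound $\sum_i \#(g^{-1}(\tau)\cap J_i)\le M$ from almost every $\tau$ to \emph{every} $\tau$, which uses $g'\ne 0$ a.e.\ a second time. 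Your argument is by contradiction and purely topological: the hypothesis $g'\ne 0$ a.e., via the fundamental theorem of calculus for Lipschitz functions, upgrades ``nowhere strictly monotone'' to ``non-monotone on every subinterval'', and the bump-doubling induction then produces $N+1$ pairwise disjoint closed intervals whose images all contain a common nondegenerate interval of levels, so that a positive-measure set of levels has multiplicity at least $N+1$ --- which is exactly what is needed to contradict an \emph{essential} supremum bound. This buys elementarity: no area formula, no counting argument valid for all (rather than a.e.) levels, only the intermediate value theorem and the standard characterization of monotonicity via triples $u<v<w$. What it gives up is localization: the paper exhibits the monotonicity interval concretely, around the preimages of a maximal-multiplicity level, whereas your proof is purely existential. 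The subtlety you flag in the inductive step (disjointness of the two intervals produced by one bump) is real but handled exactly as you indicate: choose $[\alpha',\beta']$ with $\beta'$ strictly below the peak value $g(v)$ and above $\max\{g(u),g(w)\}$, excise a small neighbourhood $(v-\delta,v+\delta)$ of the peak on which $g>\beta'$, and the intermediate value theorem still forces the images of the disjoint intervals $[u,v-\delta]$ and $[v+\delta,w]$ to contain $[\alpha',\beta']$.
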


\begin{proof}
Let ${C}$ denote the set of points $x\in (0,1)$ where $g$
is not differentiable or $g'(x)=0$.
By the assumptions (and Rademacher's theorem) ${C}$ has measure zero.
Then by the area formula
\begin{equation*}
0=\int_{C} |g'(x)| \, dx= \int_{g({C})} \#(g^{-1}(t))\, dt,
\end{equation*}
hence $g({C})$ has zero Lebesgue measure (since $\#(g^{-1}(t)) \ge 1$ for all $t\in g({C})$).

Let
\begin{equation*}
M:= \esssup_{t\in \R} \#(g^{-1}(t)).
\end{equation*}
Since for any $t\in \R$ we have $\#(g^{-1}(t)) \in \N \cup \{0\}$ ,
there exists a set $R \subset \R$ with strictly positive measure
such that $\#(g^{-1}(t)) = M$ for all $t\in R$.
In particular, we can take $t\in R \setminus g({C})$.
Then $g^{-1}(t) = \{x_1, x_2, \ldots, x_M\}$
and $g'(x_i) \ne 0$. Hence there exist disjoint open intervals $I_i$
containing $x_i$ such that $g(\cdot)-t$ has different signs on $\partial I_i$, where $i=1,2,\ldots, M$.

\begin{figure}
	\def\svgwidth{.7\columnwidth}
	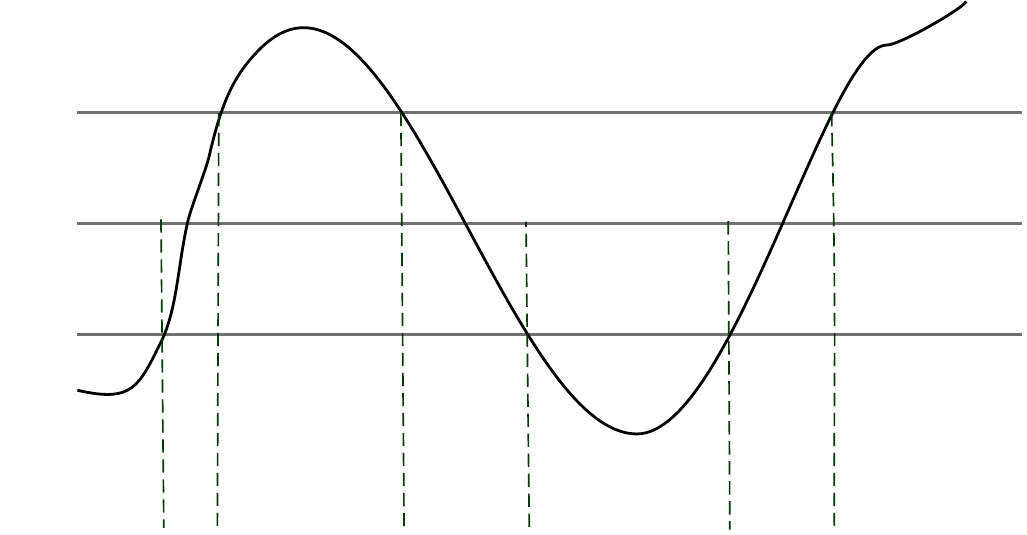 
	\caption{Situation described in the proof of Lemma \ref{l-somewhere-monotone}. The intervals $I_i$ are depicted in blue.}
	\label{fig:proof}
\end{figure}

Using continuity of $g$ we can always find an $\eps>0$ such that
$[t-\eps, t+\eps] \subset \bigcap_{i=1}^M g(I_i)$.
Hence, by the intermediate value property we can find nonempty open intervals $J_i \subset I_i$ (with $x_i\in J_i$) such that $g(\d J_i) =\{t-\eps, t+\eps\}$ for each $i\in 1,2\ldots,M$.

By the intermediate value property for each $\tau \in [t-\eps, t+\eps]$ we have 
\begin{equation}\label{e-lower-bound-on-N-of-preim}
\#(g^{-1}(\tau)\cap J_i)\ge 1, \qquad i\in 1,\ldots,M.
\end{equation}
On the other hand for \emph{all} $\tau \in [t-\eps, t+\eps]$ we have
\begin{equation}\label{e-bound-on-N-of-preim}
\sum_{i=1}^M \#(g^{-1}(\tau)\cap J_i) \le M.
\end{equation}
Indeed, by the definition of $M$ the estimate \eqref{e-bound-on-N-of-preim} holds for a.e. $\tau$, and
if it fails for some $\tau$, then at least for some $i$
it holds that $\#(g^{-1}(\tau)\cap J_i) \ge 2$.
Since $g'\ne 0$ a.e., by the intermediate value property
this implies existence of $\xi>0$ such that $\#(g^{-1}(s)\cap J_i) \ge 2$ for all $s\in [\tau, \tau + \xi)$ (or all $s\in (\tau-\xi,\tau]$), and in view of \eqref{e-lower-bound-on-N-of-preim} this clearly contradicts the definition of $M$.

From the estimates \eqref{e-lower-bound-on-N-of-preim} and \eqref{e-bound-on-N-of-preim} we conclude that for all $\tau\in [t-\eps, t+\eps]$ it holds that
\begin{equation*}
\#(g^{-1}(\tau)\cap J_i)= 1, \qquad i\in 1,\ldots,M.
\end{equation*}
Therefore for each $i\in 1,\ldots,M$ the function $g$
is injective on $J_i$, hence it is strictly monotone on $J_i$ (by continuity).
\end{proof}

Now we are in a position to show that
for $\{\mu_t\}$ constructed in the proof of Theorem~\ref{t-L1}
it holds that
$[t\mapsto \mu_t] \notin L^\infty((0,T); \mathscr M(\R))$.
We argue by contradiction. Since by \eqref{e-def-b} for a.e. $t$
\begin{equation*}
|\tilde\mu_t| = \#(f^{-1}(t)),
\end{equation*}
the inclusion
$[t\mapsto \mu_t] \in L^\infty((0,T); \mathscr M(\R))$
is equivalent to the inequality
$\esssup_t \#(f^{-1}(t)) < \infty$.

From Lemma~\ref{l-somewhere-monotone} it follows that the function $f$ constructed above is monotone on some nonempty open interval $I\subset (0,1)$. But then $f' \ge 0$ a.e. on $I$, and this contradicts the construction of $f$ (more specifically, the sets $P$ and $N$).

\begin{remark}
  We  remark that if $f$ were monotone on some interval $I$ then uniqueness would fail for the Cauchy problem for \eqref{ODE} with $\b$ constructed in the proof of Theorem~\ref{t-L1}. Indeed, without loss of generality suppose that $f$ is strictly increasing on $I$.
Then for any $x\in I$ there exist at least two (actually, infinitely many) integral curves $\gamma \in \Gamma_\b$ such that $\gamma(0)=x$. Indeed, clearly $\gamma(t):=x$ ($\forall t\in [0,T]$) belongs to $\Gamma_\b$.
On the other hand, for any $y\in I$ such that $y>x$ one can define $\gamma$ by
\begin{equation*}
\gamma(t):=
\begin{cases}
x, & t < f(x); \\
f^{-1}(t), & f(x) \le t < f(y); \\
y, & t \ge f(y).
\end{cases}
\end{equation*}
Then one readily checks that $\gamma \in \Gamma_\b$,
since for a.e. $t\in (f(x), f(y))$ it holds that
\begin{equation*}
\gamma'(t) = \frac{1}{f'(f^{-1}(t))} = \frac{1}{f'(\gamma(t))} = \b(t,\gamma(t)).
\end{equation*}
\end{remark}

\section{Non-uniqueness in the class \texorpdfstring{$L^\infty((0,T); \mathscr M(\R^2))$}{Linf(0,T;M(R2))}} 

The aim of this final section is to show the following result:
  
  \begin{theorem}\label{t-Linfty}
    There exist $T>0$, a bounded Borel $\b \colon [0,T] \times \R^2 \to \R^2$ and $[t\mapsto \mu_t] \in L^\infty((0,T); \mathscr M(\R^2))$ satisfying the following conditions:
    \begin{enumerate}
      \item[(i)] $\b$ has only constant characteristics, i.e. $\gamma\in \Gamma_\b$ if and only if there exists $x\in \R^2$ such that $\gamma(t) = x$ for all $t\in[0,T]$; 
      \item[(ii)] $\{\mu_t\}_{t\in [0,T]}$ solves \eqref{PDE} with zero initial condition.
    \end{enumerate}
  \end{theorem}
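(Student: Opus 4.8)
The plan is to lift the one-dimensional counterexample of Theorem~\ref{t-L1} to the plane by a \emph{time-sheared smearing} in the extra spatial variable, which trades the temporal concentration responsible for $[t\mapsto\mu_t]\notin L^\infty$ for a bounded spread in the new direction. Concretely, let $\beta(t,x)$ and $\{\tilde\mu_t\}$ denote the bounded Borel field and the solution built in the proof of Theorem~\ref{t-L1} (so $\beta(t,x)=\1_{F([0,1])}(t,x)/f'(x)$ and $\tilde\mu_t=\sum_{x\in f^{-1}(t)}\sign(f'(x))\delta_x$), and set, for $(t,x,y)\in[0,T]\times\R\times\R$,
\[
\b(t,x,y):=\bigl(\beta(t-y,x),\,0\bigr)\,\1_{[0,1]}(y),\qquad
\mu_t:=\int_0^1 \bigl(\tilde\mu_{t-s}\otimes\delta_s\bigr)\,ds,
\]
the latter supplemented by the smeared analogue of the constant-trajectory correction of the one-dimensional case (the stationary masses $\delta_{(1,s)}$ and $-\delta_{(0,s)}$ switched on at $t=f(1)+s$ and $t=f(0)+s$ respectively, integrated in $s\in[0,1]$). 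Equivalently, the active part of $\mu_t$ is carried by the spacetime sheet $\{t=f(x)+y\}$ with $x,y\in[0,1]$.

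I would then verify the three required properties. For \eqref{PDE}, since the slices $\{y=s\}$ are mutually singular I apply Fubini and, on each slice, the change of variables $r=t-s$; the $\d_y\fhi$ term drops out because $\b$ has vanishing second component, and what remains reduces \emph{exactly} to the one-dimensional weak identity already established for $\tilde\mu$, with the smeared corrections cancelling the correspondingly smeared boundary defect $\int_0^1(-\delta_{(F(1),s)}+\delta_{(F(0),s)})\,ds$. Since $f\ge 1$, the sheet avoids $\{t\approx 0\}$, so the initial datum vanishes, while $\mu_t$ is plainly nonzero at intermediate times. For the characteristics, the key point is again $\b_y\equiv0$: along any integral curve $y$ is constant, and on each line $\{y=y_0\}$ the equation for $x$ is precisely the one-dimensional ODE driven by $\beta(\cdot-y_0,\cdot)$; since the cone estimate $|f(x)-f(y)|<|x-y|$ behind Lemma~\ref{l-uniqueness-of-characteristics-1D} is invariant under the time-shift $f\mapsto f+y_0$, only constant characteristics survive. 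Finally, by mutual singularity of the slices,
\[
|\mu_t|(\R^2)\le \int_0^1 \#\bigl(f^{-1}(t-s)\bigr)\,ds + C = \int_{t-1}^{t}\#\bigl(f^{-1}(r)\bigr)\,dr + C\le \int_0^1|f'|\,dx + C = 1+C,
\]
where $C$ bounds the (analogously windowed) correction part, so $[t\mapsto\mu_t]\in L^\infty((0,T);\M(\R^2))$.

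The main obstacle, and the reason a genuinely two-dimensional construction is needed, is reconciling the $L^\infty$ bound with the triviality of the characteristics. Lemma~\ref{l-somewhere-monotone} shows that in one dimension boundedness of $\#f^{-1}(t)$ forces $f$ to be monotone on some interval, which in turn produces non-constant characteristics; the smearing circumvents this because the temporal concentration is dispersed along $y$ (making $\int_\R\#f^{-1}$, not $\esssup\#f^{-1}$, the relevant quantity) while the \emph{material} velocity in $y$ is kept equal to zero. It is exactly this last feature that must be checked with care: the sheet $\{t=f(x)+y\}$ is a wave travelling at unit speed in the $y$-direction, and the naive choice $\b=(0,1)$ would make every vertical line a non-constant characteristic, destroying property~(i). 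Choosing instead $\b_y\equiv0$, so that the wave's motion is purely kinematic rather than material, is what preserves uniqueness for \eqref{ODE}; confirming that this choice remains compatible with \eqref{PDE} is the heart of the argument.
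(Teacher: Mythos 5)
Your construction is correct, and it reaches the theorem by a genuinely different route than the paper. The paper's proof is geometric: it works in $(x,y,t)$-space with the boundary of an octahedron, places on each face a rotated copy of the divergence-free field $\alpha(1,\1_P-\1_N)$, exploits reflection symmetry across the coordinate planes so that all Gauss--Green boundary terms cancel (hence no correction terms are needed at all), and then disintegrates $\H^2\rest\partial\Delta$ in time to produce the uniformly bounded family $\mu_t=u\,\nu_t$; triviality of the characteristics is reduced to the non-existence of solutions of $\dot\xi=\alpha(\1_P(\xi)-\1_N(\xi))$. You instead average the one-dimensional example of Theorem~\ref{t-L1} over time shifts: on each slice $\{y=s\}$ your field is the delayed 1D field with vanishing second component, so Fubini plus the change of variables $r=t-s$ reduces the weak formulation to the 1D defect identity already proved, and the smeared stationary Diracs cancel the smeared defect exactly as in the 1D proof; your $L^\infty$ bound rests on the area formula $\int_\R\#(f^{-1}(r))\,dr=\int_0^1|f'(x)|\,dx=1$, which is precisely how smearing converts the divergent quantity $\esssup_t\#(f^{-1}(t))$ (the obstruction identified through Lemma~\ref{l-somewhere-monotone}) into a finite integral; and since $\b_2\equiv 0$ forces $y$ to be constant along any characteristic, the cone argument of Lemma~\ref{l-uniqueness-of-characteristics-1D}, which is invariant under the shift $f\mapsto f+y_0$, gives property (i). Your route buys economy and transparency---every ingredient is recycled from the one-dimensional section, and the mechanism repairing the $L^\infty$ failure is laid bare---while the paper's route buys a self-contained, more symmetric example that is compactly supported in spacetime (the octahedron solution vanishes for $|t|\ge 1$, whereas your correction walls persist for all later times) and avoids defect corrections entirely. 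Two details you should still pin down, though neither is a real gap: fix $T$ explicitly (e.g.\ $T=5$), so that for every $s\in[0,1]$ the shifted test function $\psi_s(r,x):=\fhi(r+s,x,s)$ is admissible in the 1D identity on $[0,4)$; and record that $\{\mu_t\}$ is a Borel family, which follows from the area formula as in the 1D case, since your smeared measure is nothing but the image of the signed measure $f'\,\L^1\rest\{x\in[0,1]\,:\,0\le t-f(x)\le 1\}$ under the Borel map $x\mapsto(x,\,t-f(x))$.
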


\begin{proof}
The proof will consist in essentially two steps. We will first work in 2D, constructing an example very similar to the one discussed for the proof of Theorem \ref{t-L1}. We will then suitably embed this into the three-dimensional euclidean space $\R^3$ in such a way that the path of measures resulting from this construction will be uniformly bounded. 

Consider the three dimensional Euclidean space with the coordinates $(x,y,t)$.
Let $(\xi,\eta,\zeta)$ denote the coordinates in the Cartesian system
with the origin $O'=(2^{-1},2^{-1},0)$ and the axes
$O'\xi$, $O'\eta$ and $O'\zeta$ having directions $(-1,1,0)$, $(-1,-1,2)$ and $(1,1,1)$ respectively (see Fig. \ref{fig:step1}).

\begin{figure}[h!]
  \subfloat[The coordinates $(x,y,t)$ and $(\xi,\eta,\zeta)$. \label{fig:step1}]
  {
      \tdplotsetmaincoords{70}{90}
      \begin{tikzpicture}[scale=2.3,tdplot_main_coords,>=latex, x={(1,-0.5,0)}]
      %
      %
      \draw (-2,0,0)--(-1,0,0);
      \draw[dashed](-1,0,0)--(1,0,0);
      \draw (0,-1.5,0)--(0,-.3,0);
      \draw[dashed](0,-1,0)--(0,1,0);
      \draw[->] (0,1,0)--(0,1.5,0) node[anchor=north east]{$y$};
      \draw (0,0,-1.5)--(0,0,-.3);
      \draw[dashed](0,0,-.2)--(0,0,1);
      \draw[thick,->, red](.5,.5,0) -- (.7,.7,-2*.2) node[anchor=north]{$\bm n_3$};
      \draw[thick,->, red](0,.5,.5) -- (-2*.2,.7,.7) node[anchor=south]{$\bm n_1$};
      \draw[thick,->, red](.5,0,.5) -- (.7,-2*.2,.7) node[anchor=north]{$\bm n_2$};
      \foreach \x in {1,1}{
        \foreach \y in {1,1} {
          \foreach \z in {1,1} {
            \ifthenelse{\x=-1}{
              \filldraw[fill opacity=0.15, draw=blue, fill=blue!20, loosely dashed]
              (0,0,\z)--(0,\y,0)--(\x,0,0)--cycle;
            }{
            \filldraw[fill opacity=0.15, draw=blue, fill=blue!20]
            (0,0,\z)--(0,\y,0)--(\x,0,0)--cycle;
          }
        }
      }
    }
    
    \draw[->] (0,0,1)--(0,0,1.5) node[anchor=north east]{$t$}; 
    \draw[->] (1,0,0)--(2,0,0) node[anchor=north east]{$x$};  
    \draw[thick,->,green!50!black] (0.5,0.5,0)--(0.5-0.707,0.5+0.707,0) node[anchor=south east]{$\xi$};
    \draw[thick,->,green!50!black] (0.5,0.5,0)--(0.5-0.408,0.5-0.408,0+0.816) node[anchor=north east]{$\eta$};
    \draw[very thick,->,green!50!black] (0.5,0.5,0)--(0.5+0.577,0.5+0.577,0+0.577) node[anchor=west]{$\zeta$};
    \end{tikzpicture}
  }\qquad 
\subfloat[Extension of the vector field using reflections. \label{fig:step4}]
{
  \tdplotsetmaincoords{70}{90}
  \begin{tikzpicture}[scale=2.3,tdplot_main_coords,>=latex, x={(1,-0.5,0)}]
  %
  %
    \draw (-2,0,0)--(-1,0,0);
    \draw[dashed](-1,0,0)--(1,0,0);
    \draw (0,-1.5,0)--(0,-.3,0);
    \draw[dashed](0,-1,0)--(0,1,0);
    \draw[->] (0,1,0)--(0,1.5,0) node[anchor=north east]{$y$};
    \draw (0,0,-1.5)--(0,0,-1);
    \draw[dashed](0,0,-1)--(0,0,1);
  
  \foreach \x in {-1,1}{
    \foreach \y in {-1,1} {
      \foreach \z in {-1,1} {
        \ifthenelse{\x=-1}{
          \filldraw[fill opacity=0.3, draw=blue, fill=blue!20, loosely dashed]
          (0,0,\z)--(0,\y,0)--(\x,0,0)--cycle;
        }{
        \filldraw[fill opacity=0.3, draw=blue, fill=blue!20]
        (0,0,\z)--(0,\y,0)--(\x,0,0)--cycle;
      }
    }
  }
}

\draw[->] (0,0,1)--(0,0,1.5) node[anchor=north east]{$t$}; 
\draw[->] (1,0,0)--(2,0,0) node[anchor=north east]{$x$};  
\end{tikzpicture}
}
  \caption{Construction of the vector field $\bm B$ and the function $u$.}
\end{figure}
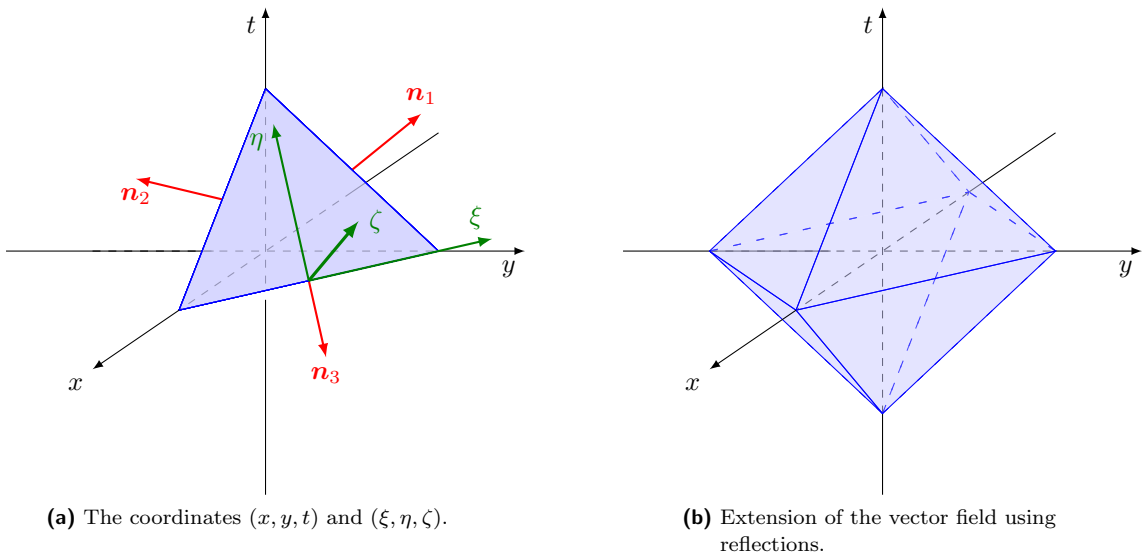

\textit{The 2D construction.}
Let us consider the plane $O'\xi \eta$ and work in the coordinates $(\xi,\eta)$.
Let $f$ and $f_k$ ($k \in \N$) be the functions constructed in the proof of Lemma \ref{l-sets-P-and-N}. We set $P:=f^{-1}(1)$, $N:= \R\setminus P$, $P^k:=(f^{k})^{-1}(1)$ and $N^k:=\R \setminus P^k$. Let
\begin{equation*}
\bm W(\xi, \eta) := \alpha\cdot (1, \1_{P}(\xi) - \1_{N}(\xi)),
\quad
\bm W^k(\xi, \eta) := \alpha\cdot (1, \1_{P^k}(\xi) - \1_{N^k}(\xi)),
\end{equation*}
where $\alpha>0$ is a geometrical constant to be specified later.
Clearly $\dive_{\xi,\eta}(\bm W)=0$ and the $\eta$-component of $\bm W$ (and $\bm W^k$) takes only the values $\pm \alpha$.

Let now $D\subset \R^2_{\xi,\eta}$ be an open, bounded set with piecewise smooth boundary $\partial D$ and assume that $\partial D$ does not contain vertical segments. We claim that
\begin{equation} \label{e-Gauss-Green-corollary}
\dive ( \1_D \bm W) = \bm W \cdot \bm \nu \H^{1}\rest_{\partial D}
\quad \text{in $\mathscr D'(\R^2)$},
\end{equation}
where $\bm \nu$ is the outer unit normal to $\partial D$
and $\H^{1}\rest_{\partial D}$ is the restriction of $\H^1$ to $\partial D$.

Indeed, $\bm W^k$ are piecewise constant inside $D$, so decomposing $D$
into finitely many pieces, applying the classical Gauss--Green Theorem
for each piece and summing the results we get that for any test function $\phi\in C^\infty_c(\R^2)$
\begin{equation*}
\int_{D} \bm W^k \cdot \nabla \phi \, dx = \int_{\partial D} \phi \bm W^k \cdot \bm \nu d\H^{1}. 
\end{equation*}
Since $\partial D$ does not contain vertical segments,
by construction of the sets $P^k$ and $N^k$ (see Lemma~\ref{l-sets-P-and-N}) we have $\bm W^k\to \bm W$ \;
$\H^{1}$-a.e. on $\partial D$ as $k\to \infty$.
Passing to the limit by means of Dominated convergence Theorem 
we get \eqref{e-Gauss-Green-corollary}.

\textit{Passage to 3D.}
We extend $\bm W$ to the whole space using the coordinates $(\xi,\eta,\zeta)$ as follows:
\begin{equation*}
\bm V(\xi, \eta, \zeta) := \alpha\cdot (1, \1_{P}(\xi) - \1_{N}(\xi), 0).
\end{equation*}

Let us switch to the coordinates $(x,y,t)$.
Since $(0,0,1)\cdot \bm V = \pm\alpha \sqrt{2/3}$, fixing $\alpha=\sqrt{3/2}$ we achieve that the $t$-component of $\bm V$ is $\pm 1$.

Let $T:=\{(x,y,t) \;|\; x,y,t>0, \; x+y+t = 1\}$.
By \eqref{e-Gauss-Green-corollary} it holds that $\dive (\1_T \bm V \H^2) = g_1 + g_2 + g_3$, where $g_i = \bm V \cdot \bm n_i \H^1\rest E_i$ and
\begin{equation*}
\begin{aligned}
E_1 = \{(0,y,t) \;|\;   y,t>0, \;   y+t = 1\}&, \quad \bm n_1 = (-2,1,1)/\sqrt{6},\\
E_2 = \{(x,0,t) \;|\; x,  t>0, \; x+  t = 1\}&, \quad \bm n_2 = (1,-2,1)/\sqrt{6},\\
E_3 = \{(x,y,0) \;|\; x,y  >0, \; x+y   = 1\}&, \quad \bm n_3 = (1,1,-2)/\sqrt{6}.
\end{aligned}
\end{equation*}
We define $\bm U\colon \R^3 \to \R^3$ as follows:
\begin{equation*}
\bm U(x,y,t) = \sum_{s_1,s_2,s_3 \in \{\pm 1\}}
\1_T(s_1 x, s_2 y, s_3 t) \, \bm U_{s_1, s_2, s_3}(s_1 x, s_2 y, s_3 t),
\end{equation*}
where
\begin{equation*}
\bm U_{s_1, s_2, s_3}(x,y,t) = 
(s_2 s_3 \bm V_1(x,y,t), \,
 s_1 s_3 \bm V_2(x,y,t), \,
 s_1 s_2 \bm V_3(x,y,t)).
\end{equation*}
Observe that 
\begin{equation}\label{e-div-U}
\dive (\bm U \H^2) = g, \quad\text{where} \quad  g(x,y,t) = \sum_{i=1}^3 \sum_{s_1,s_2,s_3 \in \{\pm 1\}} 
s_1 s_2 s_3 \, g_i(s_1 x, s_2 y,s_3 t)
\end{equation}
(in the sense of distributions). Notice that also
$g_1(x,y,t) = g_1(-x,y,t)$, $g_2(x,y,t) = g_2(x,-y,t)$
and $g_3(x,y,t) = g_3(x,y,-t)$. Because of this symmetry the
right hand side of \eqref{e-div-U} is zero. For instance, for $i=1$
we have
\begin{multline*}
\sum_{s_1,s_2,s_3 \in \{\pm 1\}} 
s_1 s_2 s_3 \, g_1(s_1 x, s_2 y,s_3 t)
\\
=
\sum_{s_2,s_3 \in \{\pm 1\}} 
s_2 s_3 \, g_1(x, s_2 y,s_3 t)
+
\sum_{s_2,s_3 \in \{\pm 1\}} 
(-1) s_2 s_3 \, g_1(-x, s_2 y,s_3 t) = 0.
\end{multline*}

Consider the octahedron $\Delta:= \{(x,y,t) : \bm U(x,y,t) \ne 0\}$ and let
\begin{equation*}
u(x,y,t) := \begin{cases}
\bm U_3(x,y,t), & (x,y,t) \in \Delta; \\
0, & (x,y,t) \notin \Delta,
\end{cases}
\quad
\bm B(x,y,t) := \begin{cases}
u(x,y,t) \bm U(x,y,t), & (x,y,t) \in \Delta; \\
(0,0,1), & (x,y,t) \notin \Delta.
\end{cases}
\end{equation*}

Then $\bm B_3 = 1$ (everywhere) and by \eqref{e-div-U} we have $\dive(\1_\Delta u \bm B \H^2) = \dive(\bm U \H^2) = 0$ (in the sense of distributions).
Hence for any test function $\fhi\in C_c^\infty(\R^3)$
\begin{equation}\label{e-div-uB}
\int_{\Delta} u \bm B \cdot \nabla_{x,y,t} \fhi \, d\H^2 = 0.
\end{equation}
Denoting with $S_t := \{x,y\in \R \;|\; (x,y,t)\in \Delta\}$ and disintegrating
the measure $\H^2 \rest \Delta$ as
\begin{equation*}
\H^2 \rest \Delta = \int \nu_t \, dt,
\quad\text{where}\quad
\nu_t = \alpha \H^1 \rest S_t,
\end{equation*}
(see e.g. \cite[Thm. 2.28]{AFP}) we can rewrite \eqref{e-div-uB} as
\begin{equation*}
\int_{\R} \int_{\R^2}(u \bm B \cdot \nabla_{x,y,t} \fhi) d\nu_t \, dt = 0.
\end{equation*}
Then the family of measures
\begin{equation*}
\mu_t := u \cdot \nu_t
\end{equation*}
satisfy \eqref{PDE} with
\begin{equation*}
\bm b(x,y,t) := (\bm B_1(x,y,t), \bm B_2(x,y,t)).
\end{equation*}

\textit{The characteristics of $\bm b$.}
We claim that $\gamma\in C(\R; \R^2)$ is a characteristic of $\bm b$
if and only if $\gamma(t) = \gamma(0)$ for all $t$.
This claim follows immediately if $\gamma(t)\notin \Delta$ for all $t$
since outside of $\Delta$ the vector field $\bm b$ is zero.
Therefore it is sufficient to show that $\gamma$ can intersect
each face of $\Delta$ at most once. 

Suppose that $\gamma$ intersects the face $T$ (defined above) in two points.
Since $\bm b$ is zero outside of $\Delta$ this is possible only if
there exists some nonempty segment $[a,b]$
such that $(\gamma_1(t), \gamma_2(t), t) \in \Delta$ for all $t\in[a,b]$. Then in the coordinates $(\xi,\eta,\zeta)$ the ODE for $\gamma$ can be written as
\begin{equation*}
\dot \xi = \alpha (\1_P(\xi) - \1_N(\xi)), \quad
\dot \eta = \alpha, \quad
\dot \zeta = 0.
\end{equation*}
But the first equation does not have solutions (see e.g. \cite{Gus18a} for the details), hence we have obtained a contradiction.

\textit{The uniform bounds.}
Ultimately, by definition of $\nu_t$
\begin{equation*}
|\nu_t|(\R^2) = \alpha \cdot 4 \sqrt{2}
\cdot \begin{cases}
1-t, & t\in [0,1];\\
1+t, & t\in [-1,0];\\
0, & t \notin[-1,1],
\end{cases}
\end{equation*}
hence $|\mu_t| \le \alpha \cdot 4 \sqrt{2}$, i.e. the family of measures $\{\mu_t\}$ is uniformly bounded.
\end{proof}


\section{Continuous vector fields}
\label{s:cont_one_dim}

In this section we prove some partial results for continuous vector fields that were
mentioned in the Introduction.

\begin{proposition}\label{p-o+b=cont}
If a vector field $\b\colon (0,T)\times \R^d \to \R^d$ satisfies (\hyperref[i-osgood]{O}) and (\hyperref[i-bounded]{B}) then $\b$ is continuous. 
\end{proposition}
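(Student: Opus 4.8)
The plan is to reduce everything to a fixed time and prove continuity in the spatial variable: since $C\in L^1(0,T)$ we have $C(t)<\infty$ for a.e. $t$, so I fix such a $t$, abbreviate $v:=\b(t,\cdot)$, $C:=C(t)<\infty$ and $M:=\sup\|\b\|<\infty$ (finite by (\hyperref[i-bounded]{B})), and show that $x\mapsto v(x)$ is continuous. In dimension $d=1$ there is nothing to do: dividing (\hyperref[i-osgood]{O}) by $\|x-y\|$ gives $\|v(x)-v(y)\|\le C\rho(\|x-y\|)$, so $v$ has $\rho$ as a modulus of continuity. The whole point is that for $d\ge 2$ the hypothesis (\hyperref[i-osgood]{O}) controls only the projection of $v(x)-v(y)$ onto the direction $x-y$, and says nothing \emph{a priori} about the component of $v(x)-v(y)$ orthogonal to $x-y$.

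To deal with this I would argue by contradiction. Suppose $v$ is discontinuous at some $x_0$: then there are $\delta>0$ and $y_n\to x_0$ with $w_n:=v(y_n)-v(x_0)$ satisfying $\|w_n\|\ge\delta$ (and $\|w_n\|\le 2M$). Writing $h_n:=y_n-x_0\ne 0$, condition (\hyperref[i-osgood]{O}) applied to the pair $(y_n,x_0)$ gives $|\langle w_n,h_n\rangle|\le C\|h_n\|\rho(\|h_n\|)$, so the component of $w_n$ along $h_n$ tends to $0$. Decomposing $w_n=w_n^{\parallel}+w_n^{\perp}$ relative to $h_n$, the orthogonal part therefore satisfies $\|w_n^{\perp}\|\ge\delta/2$ for $n$ large. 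The idea is now to \emph{detect} this surviving orthogonal component by probing $v$ at an auxiliary point placed in the direction $u_n:=w_n^{\perp}/\|w_n^{\perp}\|$, which is orthogonal to $h_n$.

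Concretely I set $z_n:=x_0+s_n u_n$ and $a_n:=v(z_n)-v(x_0)$, and apply (\hyperref[i-osgood]{O}) to the three pairs $(y_n,x_0)$, $(z_n,x_0)$ and $(z_n,y_n)$. Expanding $\langle a_n-w_n,\,s_n u_n-h_n\rangle=\langle v(z_n)-v(y_n),\,z_n-y_n\rangle$ and isolating the term $s_n\langle w_n,u_n\rangle=s_n\|w_n^{\perp}\|$ yields
\[
s_n\|w_n^{\perp}\|\le s_n|\langle a_n,u_n\rangle|+|\langle a_n,h_n\rangle|+|\langle w_n,h_n\rangle|+|\langle v(z_n)-v(y_n),\,z_n-y_n\rangle|.
\]
By (\hyperref[i-osgood]{O}) the first, third and fourth terms on the right are $o(s_n)$ (using only $\rho(0)=0$ and continuity of $\rho$, since $\|z_n-x_0\|=s_n$, $\|h_n\|\to 0$ and $\|z_n-y_n\|=\sqrt{s_n^2+\|h_n\|^2}$), whereas the single uncontrolled term is the cross term $|\langle a_n,h_n\rangle|$, which boundedness bounds only by $\|a_n\|\,\|h_n\|\le 2M\|h_n\|$.

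The crux — and the main obstacle — is precisely this cross term, and it is resolved by using \emph{two separate scales}: choosing $s_n:=\sqrt{\|h_n\|}$ forces $\|h_n\|=s_n^2\ll s_n$, so that $|\langle a_n,h_n\rangle|\le 2M\|h_n\|=o(s_n)$ as well. Dividing the displayed inequality by $s_n$ and letting $n\to\infty$ then gives $\|w_n^{\perp}\|\to 0$, contradicting $\|w_n^{\perp}\|\ge\delta/2$; hence $v$ is continuous. I would emphasize in the write-up that the Osgood integral condition $\int_0^1\rho^{-1}=+\infty$ (which governs \emph{uniqueness} for \eqref{ODE}) is never used here: continuity is a purely local consequence of $\rho(0^+)=0$ together with the boundedness (\hyperref[i-bounded]{B}).
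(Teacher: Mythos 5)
Your proof is correct, but it takes a genuinely different route from the paper's. Both arguments proceed by contradiction and rest on the same mechanism --- (O) controls only the component of a difference of values of $\b$ along the separation vector, so one must probe the field at an auxiliary point offset in the direction of the putative jump --- but the execution differs substantially. The paper first uses (B) to extract a subsequence with $\b(t,x_n)\to \b(t,x)+z$, $z\neq 0$, and passes to the limit $n\to\infty$ in (O) for each \emph{fixed} $y$, obtaining $|\langle z+\b(t,x)-\b(t,y),\,x-y\rangle|\le C(t)\,\|x-y\|\,\rho(\|x-y\|)$ for all $y$; combining this with (O) for the pair $(x,y)$ via the triangle inequality gives $|\langle z,x-y\rangle|\le 2C(t)\,\|x-y\|\,\rho(\|x-y\|)$, and the probe $y=x+sz$ with $s\to 0^+$ then yields $|z|\le 2C(t)\rho(s\|z\|)\to 0$, a contradiction. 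Because the limit in $n$ is taken \emph{before} the probe point is chosen, the displacement $h_n=y_n-x_0$ has already disappeared from the inequality, so the cross term $\langle a_n,h_n\rangle$ --- the crux of your argument, which forces your parallel/perpendicular decomposition and the two-scale choice $s_n=\sqrt{\|h_n\|}$ --- simply never arises. Your version buys a subsequence-free, fully finite-$n$ argument that isolates exactly which component of the jump is the obstruction; the paper's version buys brevity: compactness does the bookkeeping, the probe direction is $z$ itself, and no case distinction on $d$ is needed. You are right to note that both proofs use only $\rho(0^+)=0$, continuity and monotonicity of $\rho$, and (B), with the Osgood integral condition playing no role; also note that, like the paper, you obtain continuity of $x\mapsto\b(t,x)$ at each fixed $t$ where $C(t)$ is finite, which is all that the hypothesis (O) can possibly give.
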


\begin{proof}
Indeed, suppose that for some $t\in (0,T)$, $x\in \R^d$ and $\{x_n\}_{n\in\N}$ it holds that $x_n\to x$ and $\b(x_n) \not\to \b(x)$ as $n\to \infty$. Since $\b$ is uniformly bounded, by passing if necessary to a subsequence we may assume that $\b(x_n)\to \b(x)+z$ for some $z\in \R^d$ as $n\to \infty$. By (\hyperref[i-osgood]{O}) for any $y\in \R^d$
\begin{equation*}
|\langle \b(x_n) - \b(y), x_n - y\rangle| \le C(t) |x_n - y| \rho(|x_n - y|).
\end{equation*}
Passing to the limit in both sides of this inequality we get
\begin{equation*}
|\langle z+\b(x) - \b(y), x - y\rangle| \le C(t) |x - y| \rho(|x - y|).
\end{equation*}
Hence by triangle inequality using (\hyperref[i-osgood]{O}) again we obtain
\begin{equation*}
|\langle z, x-y\rangle|
= |\langle z + \b(x) - \b(y), x-y\rangle - \langle\b(x) - \b(y), x-y\rangle|
\le 2C(t) |x-y|\rho(|x-y|).
\end{equation*}
Taking $y= x + s\cdot z$ with $s>0$ we get
\begin{equation*}
|z|\le 2 C(t) \rho(s |z|).
\end{equation*}
Passing to the limit as $s\to 0$ we get $|z|=0$, and this concludes the proof.
\end{proof}

\begin{proposition}\label{p-1d-cont-uniq}
Suppose that $\b \in C(\R)$ and for any $(t,x)\in \R^2$ there exists a unique $\gamma \in \Gamma_\b$ such that $\gamma(t)=x$. Then for any $\bar\mu \in \mathscr M(\R)$ the Cauchy problem for \eqref{PDE} with the initial condition $\mu_t|_{t=0} = \bar \mu$ has a unique solution
$[t\mapsto \mu_t] \in L^1(0,T; \mathscr M(\R))$.
\end{proposition}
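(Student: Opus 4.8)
The plan is to prove uniqueness by reducing, via linearity of \eqref{PDE}, to showing that the only solution $[t\mapsto\mu_t]\in L^1((0,T);\mathscr M(\R))$ with $\bar\mu=0$ is $\mu_t\equiv 0$. Since $\b$ is continuous (and, as in the standing hypotheses, bounded), the set $Z:=\{x\in\R:\b(x)=0\}$ is closed and its complement is a countable union of open intervals $(a_j,b_j)$ on each of which $\b$ keeps a constant sign. On each such interval I would straighten the field by the change of variables $\Phi_j(x):=\int_{x_0}^x \tfrac{dz}{\b(z)}$ with $x_0\in(a_j,b_j)$ fixed: since $1/\b$ is continuous and nonvanishing on $(a_j,b_j)$, $\Phi_j$ is a $C^1$ diffeomorphism onto its image that conjugates $\b$ to the constant field $1$, so the flow reads $X(t,x)=\Phi_j^{-1}(\Phi_j(x)+t)$ and is jointly $C^1$ on $[0,T]\times(a_j,b_j)$. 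The crucial point is that $\Phi_j$ is onto all of $\R$: near a finite endpoint (which lies in $Z$, where $\b$ vanishes) the assumed uniqueness for \eqref{ODE} forces $\int dz/|\b(z)|$ to diverge — otherwise a characteristic would reach the equilibrium in finite time and coexist there with the constant one — while near an infinite endpoint the bound $1/|\b|\ge 1/\|\b\|_\infty$ makes the integral diverge as well. Hence the flow on each $(a_j,b_j)$ is complete and keeps the interval invariant.

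Next I would run a duality argument on a single interval $(a_j,b_j)$. Fix $\psi\in C^1_c((a_j,b_j))$ and set $\phi_0(t,x):=\psi(X(-t,x))$ on $(a_j,b_j)$, extended by $0$ elsewhere; by the previous step $\phi_0$ solves the backward transport equation $\d_t\phi_0+\b\,\d_x\phi_0=0$, is $C^1$, and — since the flow keeps $\supp\psi$ inside a fixed compact subset of $(a_j,b_j)$ uniformly for $t\in[0,T]$ — has spatial support compactly contained in $(a_j,b_j)$, so it is a legitimate test function. Testing \eqref{eq-continuity-distrib} against $\phi(t,x):=\beta(t)\phi_0(t,x)$ with $\beta\in C^1_c([0,T))$ cancels the transport part and leaves
\[
\int_0^T \beta'(t)\,h(t)\,dt=0,\qquad h(t):=\int_\R \phi_0(t,x)\,d\mu_t(x),
\]
for every such $\beta$, where $h$ is measurable and integrable by Proposition~\ref{p-integrate-wrt-family-of-Borel-measures}. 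Choosing $\beta\in C^1_c((0,T))$ forces $h$ to be a.e.\ constant, and then choosing $\beta(0)\ne0$ forces that constant to vanish. Thus $\int_\R \psi(X(-t,x))\,d\mu_t(x)=0$ for a.e.\ $t$, for each fixed $\psi$. Since $X(-t,\cdot)$ is a $C^1$ diffeomorphism of $(a_j,b_j)$, running this over a countable dense family of test functions and using density yields $\mu_t\rest(a_j,b_j)=0$ for a.e.\ $t$; summing over the countably many $j$ gives $\mu_t\rest\{\b\ne0\}=0$ for a.e.\ $t$.

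It then remains to dispose of the mass on $Z$. Here the flux is harmless: for a.e.\ $t$ and any $\psi\in C^1_c(\R)$ one has $\int_\R \b\,\psi'\,d\mu_t=0$, since the integrand vanishes on $Z$ (where $\b=0$) while $\mu_t$ carries no mass on $\{\b\ne0\}$. Testing \eqref{eq-continuity-distrib} against the separated function $\beta(t)\psi(x)$ with $\beta\in C^1_c([0,T))$ therefore reduces to $\int_0^T\beta'(t)\big(\int_\R\psi\,d\mu_t\big)\,dt=0$, and the same one-dimensional argument as above gives $\int_\R\psi\,d\mu_t=0$ for a.e.\ $t$ and every $\psi\in C^1_c(\R)$; by separability $\mu_t=0$ for a.e.\ $t$, which is the asserted uniqueness.

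The main obstacle I anticipate is the first step, namely turning the qualitative hypothesis ``\eqref{ODE} has a unique solution through every point'' into the quantitative statement that each straightening map $\Phi_j$ is surjective onto $\R$ (equivalently, that the flow is complete and never reaches $Z$ in finite time). This is precisely where uniqueness at the equilibria is indispensable, and where continuity of $\b$ enters to guarantee that $1/\b$ is continuous and that $Z$ is closed with constant-sign components. The only other delicate point is verifying that the transported function $\phi_0$ is genuinely $C^1_c$, which I resolve by localizing to a single component interval where the flow is smooth, thereby sidestepping the fact that the global flow need not be differentiable across $\partial Z$.
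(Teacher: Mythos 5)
Your proposal is correct and follows essentially the same strategy as the paper's proof: decompose $\{\b\neq 0\}$ into intervals of constant sign, straighten the field there via $F(x)=\int_{x_0}^x dz/\b(z)$ (using ODE uniqueness to show $F$ is onto $\R$, so the flow is complete), kill $\mu_t$ on each interval by testing with transported test functions, and finally observe that a solution concentrated on $\{\b=0\}$ solves the equation with $\b\equiv 0$ and hence vanishes. The only differences are bookkeeping: the paper first establishes a truncated weak formulation with a universal null set in $\tau$ and tests at fixed a.e.\ $\tau$, whereas you multiply the transported test function by $\beta(t)$ and invoke the fundamental lemma plus countable density, and you spell out the endpoint divergence argument that the paper states without proof.
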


\begin{proof}
Suppose that $[t\mapsto \mu_t] \in L^1(0,T; \mathscr M(\R))$ is a (signed) measure-valued solution to the continuity equation with $\bar \mu=0$. Then there exists a Lebesgue
negligible set $N\subset (0,T)$ such that for all $\tau\in (0,T)\setminus N$ 
for any $\Phi\in C^1_c([0,\tau]\times \R)$ it holds that
\begin{equation}\label{e-Newton-Leibniz}
\int_{\R} \Phi(\tau, x) \, d\mu_\tau(x) - \int_{\R} \Phi(0,x) d\bar \mu(x)
= \int_0^\tau \int_\R \b \cdot \d_x \Phi(t,x) \, d\mu_t(x)\, dt.
\end{equation}
(Indeed, first one can consider finite linear combinations of functions $\Phi$ having the form $\Phi(t,x) = \psi(t)\phi(x)$, where $\phi$ belong to some
countable dense subset of $C^1_c(\R)$ and $\psi \in C^1_c([0,T])$ are arbitrary. 
For such test functions \eqref{e-Newton-Leibniz} follows from \eqref{eq-continuity-distrib},
and in the general case one can apply an approximation argument.)

There are countably many open intervals where $\b>0$ or $\b<0$ (and $\bm b = 0$ on the complement of the union of all those intervals). Consider one of the intervals, i.e. suppose that $\b(\alpha)=\b(\beta) = 0$, $\alpha < \beta$ and $\b>0$ on $(\alpha,\beta)$.
Fix $x_0 \in (\alpha,\beta)$ and for all $x\in(\alpha,\beta)$ let
$$
F(x) := \int_{x_0}^x \frac{dx}{\b(x)}.
$$
(Note that $\frac{1}{\b}\in L^1[x_0, x]$ since $\min_{[x_0, x]} \b > 0$ by continuity.)
Clearly $F \in C^1(\alpha,\beta)$. By uniqueness of integral curves $F(\alpha+0) = -\infty$ and $F(\beta-0) = +\infty$.
Furthermore, $F$ is strictly increasing and continuous, hence $F^{-1} \colon \mathbb R \to (\alpha,\beta)$ is continuous and strictly increasing as well.
Since $F\in C^1(\alpha, \beta)$ we also have $F^{-1}\in C^1(\R)$. Hence
$$
X(t,x) := F^{-1}(F(x) + t)
$$
belongs to $C^1(\R\times (\alpha,\beta))$ by the chain rule. Moreover, $X(\cdot, x)$ solves \eqref{ODE}. 

Let $\omega \in C_c^1(\alpha,\beta)$ be an arbitrary test function and fix $\tau\in(0,T)$.
Then $\varphi(t,x) := \omega(X(\tau-t, x))$ belongs to $C^1_c([0,\tau]\times (\alpha,\beta))$.
(Indeed, if $[u,v]\subset (\alpha,\beta)$ contains the support of~$\omega$, then
the support of~$\varphi$ is contained in $[0,\tau]\times[X(-\tau, u), v]$.)
Moreover, $\varphi$ satisfies the transport equation $\partial_t \varphi + \b \partial_x \varphi = 0$ (pointwise) with the final condition $\varphi(\tau, x) = \omega(x)$.

Using the test function $\Phi = \varphi$ in \eqref{e-Newton-Leibniz} we get
$$
\int_\R \omega(x) \, d\mu_\tau(x) = 0
$$
and by arbitrariness of $\omega$ this implies that $\mu_\tau \equiv 0$ (for all $\tau \in (0,T)\setminus N$).
Since this holds for any of the intervals where $\b$ has constant sign, we have thus proved that $\mu_t$ is concentrated on $\{\b=0\}$ and then it solves \eqref{PDE} with $\b \equiv 0$. Hence $\mu_t \equiv 0$ globally.
\end{proof}

\section*{Acknowledgements}

The first author acknowledges ERC Starting Grant 676675 FLIRT. 
The work of the second author was supported by the ``RUDN University Program 5-100''
and RFBR Grant 18-31-00279.

\bibliographystyle{alpha}
\bibliography{biblio}
\end{document}